\newtheorem{theorem}{Theorem}
\newtheorem{lemma}[theorem]{Lemma}
\newtheorem{corollary}[theorem]{Corollary}
\newtheorem{proposition}[theorem]{Proposition}
\newtheorem{observation}[theorem]{Observation}
\DeclareMathOperator{\Cayley}{Cay}
\newcommand{\NN}{\ensuremath{\mathbb N}}
\newcommand{\QQ}{\ensuremath{\mathbb Q}}
\newcommand{\ZZ}{\ensuremath{\mathbb Z}}
\newcommand{\B}{\ensuremath{\mathcal B}}
\newcommand{\C}{\ensuremath{\mathcal C}}
\renewcommand{\H}{\ensuremath{\mathcal H}}
\newcommand\bbone{\ensuremath{\mathbbm 1}}
\begin{document}

\title{Integral Cayley multigraphs over Abelian and Hamiltonian groups}

\author{
   Matt DeVos\thanks{Supported in part by an NSERC Discovery Grant (Canada) and a Sloan Fellowship.}\\[1mm]
   {Department of Mathematics}\\{Simon Fraser University}\\{Burnaby, B.C. V5A 1S6}
\and
   Roi Krakovski\thanks{Supported in part by postdoctoral support at the Simon Fraser University.}\\[1mm]
   {Department of Mathematics}\\{Simon Fraser University}\\{Burnaby, B.C. V5A 1S6}
\and
   Bojan Mohar\thanks{Supported in part by an NSERC Discovery Grant (Canada), by the Canada Research Chair program, and by the Research Grant P1--0297 of ARRS (Slovenia).}~\thanks{On leave from: IMFM \& FMF, Department of Mathematics, University of Ljubljana, Ljubljana, Slovenia.}\\[1mm]
   {Department of Mathematics}\\{Simon Fraser University}\\{Burnaby, B.C. V5A 1S6}
\and
   Azhvan Sheikh Ahmady\\[1mm]
   {Department of Mathematics}\\{Simon Fraser University}\\{Burnaby, B.C. V5A 1S6}
   }


\maketitle

\begin{abstract}
It is shown that a Cayley multigraph over a group $G$ with generating multiset $S$ is integral
(i.e.,\ all of its eigenvalues are integers) if $S$ lies in the integral cone over the boolean algebra generated by the normal subgroups of $G$. The converse holds in the case when $G$ is abelian. This in particular gives an alternative, character theoretic proof of a theorem of Bridges and Mena (1982).
We extend this result to provide a necessary and sufficient condition for a Cayley multigraph over a Hamiltonian group to be integral, in terms of character sums and the structure of the generating set.
\end{abstract}

\section{Introduction}

A graph $X$ is said to be \emph{integral\/} if all eigenvalues of the adjacency matrix
of $X$ are integers.  This property was first defined by Harary and Schwenk \cite{HS} who
suggested the problem of determining which graphs satisfy it.  Although this problem
appears far too complicated in general, there is a broad literature on it with numerous
interesting successes.

Throughout we shall assume that $G$ is a finite group written multiplicatively.
For a subset $S \subseteq G$ which is inverse-closed (i.e., $S=S^{-1}$), we define the
\emph{Cayley graph} of $G$ over $S$, denoted $\Cayley(G,S)$, to be the graph with vertex set $G$ and $x,y \in G$ adjacent if $xy^{-1} \in S$.
In what follows, we shall consider the multigraph version. Recall that a \emph{multiset\/} is a set $S$ together with multiplicity function $\mu_S: S\to \NN$, where $\mu_S(x)$ is a positive integer for every $x\in S$ (counting ``how many times $x$ occurs in the multiset''). We set $\mu_S(x)=0$ for $x\notin S$. A multiset $S$ of group elements is \emph{inverse-closed} if $\mu_S(s)=\mu_S(s^{-1})$ for every $s\in S$. If $S$ is an inverse-closed multiset of elements of a group $G$, the \emph{Cayley multigraph\/} $\Cayley(G,S)$ is defined as above except that it is a multigraph and the number of edges joining $x,y$ in $\Cayley(G,S)$ is equal to $\mu_S(xy^{-1})$.

Any collection $\mathcal S$ of subsets of $G$ naturally generates a boolean algebra of sets by taking all sets of elements of $G$ that are obtained from the sets in $\mathcal S$ by repeatedly taking unions, intersections and the symmetric difference.
We let $\B(G)$ denote the boolean algebra generated by the normal subgroups of $G$.
When extending this notion to multisets, we include multiset operations. Formally, we take all atoms of the Boolean algebra $\B(G)$ and take all multisets that can be expressed as non-negative integer combinations of these atoms. This defines the collection $\C(G)$ of multisets that is called the \emph{integral cone} over $\B(G)$.

The following theorem gives a complete characterization of which Cayley multigraphs over abelian groups are integral.

\begin{theorem}[Bridges, Mena \cite{BM}]
\label{thm:Bridges and Mena}
If\/ $G$ is an abelian group, then $\Cayley(G,S)$ is integral if and only if $S \in {\mathcal C}(G)$.
\end{theorem}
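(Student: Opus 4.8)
The plan is to work entirely with the spectral decomposition of $\Cayley(G,S)$ furnished by the characters of the abelian group $G$. Since $G$ is abelian, the irreducible characters $\chi \in \hat G$ simultaneously diagonalize every Cayley multigraph: the eigenvalues are exactly the character sums $\lambda_\chi = \sum_{s} \mu_S(s)\,\chi(s)$, one for each $\chi$, with $(\chi(g))_{g\in G}$ the associated eigenvector. Thus $\Cayley(G,S)$ is integral precisely when every $\lambda_\chi$ is a rational integer, and I would prove the two implications separately using this reformulation throughout.

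For the sufficiency direction ($S \in \C(G) \Rightarrow$ integral) I would first observe that for any subgroup $H \le G$ one has $\sum_{h \in H}\chi(h) = |H|$ when $\chi$ restricts trivially to $H$ and $0$ otherwise; in either case the sum is an integer. Since $\B(G)$ is generated from the subgroups of $G$ (all normal, as $G$ is abelian) by unions, intersections, and complements, inclusion--exclusion writes the indicator of any $B \in \B(G)$ as an integer combination of indicators of subgroups, using that intersections of subgroups are again subgroups. Hence $\sum_{s \in B}\chi(s) \in \ZZ$ for every $B \in \B(G)$ and every $\chi$. As every $S \in \C(G)$ has $\mu_S$ equal to a nonnegative integer combination of atom indicators, each $\lambda_\chi$ is a $\ZZ$-combination of such integer sums, and so lies in $\ZZ$.

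The converse is the substantive direction, and the main obstacle is converting the arithmetic condition ``all $\lambda_\chi \in \ZZ$'' into the combinatorial condition ``$S \in \C(G)$''. Here I would invoke the Galois action. Let $n$ be the exponent of $G$, so each $\chi(s)$ is an $n$-th root of unity and each $\lambda_\chi$ is an algebraic integer in $\QQ(\zeta_n)$; it is a rational integer iff it is fixed by $\mathrm{Gal}(\QQ(\zeta_n)/\QQ) \cong (\ZZ/n)^\times$. For $t$ coprime to $n$, the automorphism $\sigma_t:\zeta_n \mapsto \zeta_n^{\,t}$ sends $\lambda_\chi$ to $\sum_s \mu_S(s)\chi(s)^t = \sum_s \mu_S(s)\chi(s^t)$; reindexing by the bijection $s \mapsto s^t$ of $G$ rewrites this as $\sum_u \mu_S(u^{t^{-1}})\chi(u)$. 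Integrality forces this to equal $\lambda_\chi$ for every $\chi$, and linear independence of characters then yields $\mu_S(u^{t^{-1}}) = \mu_S(u)$ for all $u \in G$ and all $t$ coprime to $n$. Equivalently, $\mu_S$ is constant on each set $\mathrm{gen}(C) = \{u : \langle u\rangle = C\}$ of generators of a cyclic subgroup $C \le G$.

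To finish I would locate these generator-classes inside $\B(G)$. For a cyclic subgroup $C$ one has $\mathrm{gen}(C) = C \setminus \bigcup_{C' < C} C'$, a boolean combination of subgroups, so $\mathrm{gen}(C) \in \B(G)$ and hence $\bbone_{\mathrm{gen}(C)}$ is a sum of atom indicators, i.e. $\mathrm{gen}(C) \in \C(G)$. Grouping the elements of $G$ by the cyclic subgroup they generate, the previous step gives $\mu_S = \sum_{C} c_C\, \bbone_{\mathrm{gen}(C)}$, where $c_C$ is the (nonnegative integer) common multiplicity of $\mu_S$ on $\mathrm{gen}(C)$; as a nonnegative integer combination of members of $\C(G)$, this lies in $\C(G)$, proving $S \in \C(G)$. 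I expect the delicate points to be the clean bookkeeping of the Galois and reindexing step, together with the verification that the generator-classes are exactly the right boolean combinations to match $\C(G)$.
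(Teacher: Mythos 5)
Your proof is correct, and it takes a partly different route from the paper's. The clearest difference is in the sufficiency direction: the paper proves that direction for \emph{all} finite groups (its Theorem \ref{thm:sufficiency}) by showing that the adjacency matrices of the graphs $\Cayley(G,H)$, $H \vartriangleleft G$, commute and hence share an orthogonal eigenbasis $B$, observing that each such graph is a disjoint union of cliques, and then proving that $B$-integrality survives the boolean operations (Lemmas \ref{lem:5} and \ref{lem:6}). You instead evaluate character sums directly, writing the indicator of any $B \in \B(G)$ as an integer combination of subgroup indicators by inclusion--exclusion and using $\sum_{h \in H}\chi(h) \in \{0, |H|\}$; this is shorter and more elementary, in the spirit of the Klotz--Sander proof of the ``if'' direction cited in the introduction, but it is confined to abelian groups and does not yield Theorem \ref{thm:sufficiency}. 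For necessity, both arguments run on the same cyclotomic engine with different bookkeeping: the paper's Lemma \ref{lemma7} fixes a pair $x \sim y$ and uses irreducibility of $\Phi_\ell$ to transfer a single polynomial identity from $\omega$ to $\omega^j$, whereas you act with the Galois automorphisms $\sigma_t$ on all eigenvalues simultaneously and use invertibility of the character matrix ($FF^* = nI$) to conclude $\mu_S(u^{t^{-1}}) = \mu_S(u)$; the paper then identifies the atoms of $\B(G)$ via its relation $\sim$ (Proposition \ref{prop:atoms}), while you exhibit the generator classes directly as $C \setminus \bigcup_{C' < C} C' \in \B(G)$, which for abelian $G$ are exactly the same classes.

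One step you should make explicit: to pass from ``$\mu_S$ is constant on orbits of $u \mapsto u^t$ with $\gcd(t,n)=1$'' to ``$\mu_S$ is constant on $\mathrm{gen}(C)$,'' you need that every generator $u^s$ of $\langle u \rangle$ (so $\gcd(s,\mathrm{ord}(u))=1$) equals $u^t$ for some $t \equiv s \pmod{\mathrm{ord}(u)}$ with $\gcd(t,n)=1$. This lifting fact is standard (choose $t$ by the Chinese Remainder Theorem to be $\equiv s$ modulo $\mathrm{ord}(u)$ and $\equiv 1$ modulo the primes dividing $n$ but not $\mathrm{ord}(u)$), so this is a gap of detail rather than of substance; it plays the same role as the brief ``$\gcd(j,\ell)=1$'' observation inside the paper's Lemma \ref{lemma7}.
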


Although the result of Bridges and Mena was originally stated for simple Cayley graphs, these authors proved the multigraph version.  For each group element $g \in G$, let $A_g$ denote the permutation matrix (indexed by $G \times G$) associated with $g$ and for a set $S \subseteq G$ let $A_S = \sum_{s \in S} A_s$.
Bridges and Mena proved that for an abelian group $G$, a complex linear combination of the matrices $\{A_g : g \in G\}$ is a rational matrix with rational eigenvalues if and only if it is a rational combination of the matrices $\{A_Q : \mbox{$Q$ is an atom of $\B(G)$}\}$.

There has been some recent interest in finding a new proof of this result.  Notably, So \cite{So} found a new proof in
the special case when $G$ is a cyclic group, and Klotz and Sander \cite{KS} found a new proof of the ``if'' direction for all abelian groups.  Our initial
goal was to give a new proof of Theorem \ref{thm:Bridges and Mena}.  Our proof is based on characters, and is a fairly direct generalization of that given by So.
However, our approach generalizes to non-abelian groups and enables us to consider more general classes of groups.

First, we obtain a sufficient condition for integrality of Cayley multigraphs over arbitrary groups.

\begin{theorem}
\label{thm:sufficiency}
For every finite group $G$ and every $S\in {\mathcal C}(G)$, the Cayley multigraph $\Cayley(G,S)$ is integral.
\end{theorem}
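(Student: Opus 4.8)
The plan is to translate the spectral question into the representation theory of $G$ and then exploit the special structure of $\B(G)$. For a multiset $T$ write $\hat{T} = \sum_{g} \mu_T(g)\, g \in \ZZ[G]$, so that the adjacency matrix of $\Cayley(G,S)$ is the image of $\hat S$ under the (right) regular representation. Since the regular representation decomposes as $\bigoplus_\rho d_\rho\,\rho$ over the irreducible representations $\rho$ of $G$, with $d_\rho=\dim\rho$, the multiset of eigenvalues of $\Cayley(G,S)$ is exactly the union over all irreducible $\rho$ of the eigenvalues of $\rho(\hat S)$, each counted $d_\rho$ times. Hence it suffices to show that every $\rho(\hat S)$ has integer eigenvalues; in fact I will show each is an integer scalar matrix.

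The second step is a Schur's-lemma computation for normal subgroups. For $N \trianglelefteq G$ the operator $\tfrac{1}{|N|}\rho(\hat N) = \tfrac{1}{|N|}\sum_{g\in N}\rho(g)$ is the projection onto the subspace fixed pointwise by $\rho(N)$. Because $N$ is normal this subspace is $G$-invariant, so by irreducibility it is either the whole representation space or zero. Thus $\rho(\hat N) = |N|\,I$ when $\rho|_N$ is trivial and $\rho(\hat N)=0$ otherwise; in either case it is an integer multiple of the identity.

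The crux is the third step: reducing an arbitrary atom of $\B(G)$ to normal subgroups. The point is that the indicator function of every set in $\B(G)$ is a $\ZZ$-linear combination of indicator functions of normal subgroups. Indeed, $\B(G)$ is generated by the normal subgroups under complement, union and intersection; an intersection of normal subgroups is again normal, complementation contributes only the term $\bbone_G$ (and $G$ is normal), and unions are handled by inclusion--exclusion, which introduces only intersections. Expanding an atom $Q$ this way gives $\bbone_Q = \sum_N a_N\, \bbone_N$ with $a_N\in\ZZ$ and $N$ ranging over normal subgroups, hence $\hat Q = \sum_N a_N\, \hat N$ in $\ZZ[G]$. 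Applying the second step, $\rho(\hat Q) = \lambda_{\rho,Q}\, I$ is an integer scalar matrix, where $\lambda_{\rho,Q} = \sum_{N:\, \rho|_N \text{ trivial}} a_N |N| \in \ZZ$.

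Finally, since $S\in\C(G)$ is a non-negative integer combination $\sum_Q c_Q Q$ of atoms, $\hat S = \sum_Q c_Q \hat Q$ and $\rho(\hat S) = \big(\sum_Q c_Q \lambda_{\rho,Q}\big) I$ is again an integer scalar matrix for every irreducible $\rho$; by the first step all eigenvalues of $\Cayley(G,S)$ are integers. I expect the third step to be the main obstacle: one must check that passing through the Boolean operations keeps the coefficients in $\ZZ$ and never leaves the $\ZZ$-span of normal-subgroup indicators, the saving features being that the normal subgroups are closed under intersection and that inclusion--exclusion introduces only such intersections. The scalar nature of $\rho(\hat Q)$ could alternatively be read off from the fact that every member of $\B(G)$ is a union of conjugacy classes, hence central in $\ZZ[G]$; but the genuinely new content is the \emph{integrality} of the scalar, which is precisely what the normal-subgroup decomposition supplies.
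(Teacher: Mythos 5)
Your proof is correct, but it takes a genuinely different route from the paper. The paper's argument is matrix- and graph-theoretic: it shows (Lemma \ref{lem:commute}) that the adjacency matrices of $\Cayley(G,H)$ for $H \vartriangleleft G$ pairwise commute, hence admit a common orthogonal eigenbasis $B$; each such graph is a disjoint union of cliques, hence $B$-integral; and a purely combinatorial lemma (Lemma \ref{lem:6}) shows that the Boolean algebra generated by an intersection-closed family of $B$-integral graphs remains $B$-integral, after which integer combinations handle $\C(G)$. The common eigenbasis is exactly what makes eigenvalues behave additively under the Boolean operations. You instead work inside the group algebra: you use the decomposition of the regular representation into irreducibles (which is the paper's Lemma \ref{lemma:eig}, stated later and used there only for the Hamiltonian case, not for this theorem), the averaging-projection argument giving $\rho(\hat N) \in \{0, |N|\,I\}$ for $N \vartriangleleft G$, and the fact that indicator functions of members of $\B(G)$ lie in the $\ZZ$-span of indicators of normal subgroups. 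Your step three is stated slightly informally; to make the induction over Boolean expressions airtight, observe that this $\ZZ$-span is closed under \emph{pointwise products} (since $\bbone_N \bbone_M = \bbone_{N \cap M}$ and $N \cap M$ is again normal), which is what handles intersections of arbitrary members of $\B(G)$ rather than just of normal subgroups --- but this is precisely the ``saving feature'' you identify, so it is a matter of phrasing, not a gap. The trade-off: the paper's proof is more elementary (commuting symmetric matrices and graph operations, no representation theory), while yours proves something strictly stronger --- each $\rho(\hat S)$ is an integer \emph{scalar} matrix, giving an explicit eigenvalue for each irreducible with multiplicity $d_\rho^2$ --- and it connects directly to the representation-theoretic machinery the paper develops anyway for Hamiltonian groups. (One trivial point: with the paper's convention $A_{x,y}=\mu_S(xy^{-1})$, the adjacency matrix is the image of $\hat S$ under the \emph{left}-regular representation, not the right; this affects nothing.)
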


In the course of proving Theorem \ref{thm:sufficiency}, we obtain a new proof of Theorem \ref{thm:Bridges and Mena} based on character theory.

It is worth mentioning that having used normal subgroups and not all subgroups in the definition of $\B(G)$ and ${\mathcal C}(G)$ is necessary for Theorem \ref{thm:sufficiency} to hold. There are examples (e.g.\ dihedral groups) where using a subgroup in the generating set does not yield integral Cayley graphs.

By Theorem \ref{thm:Bridges and Mena}, the converse statement to Theorem \ref{thm:sufficiency} holds in abelian groups. In the second part of the paper we investigate to what extent the converse would hold in some other groups. As a natural candidate, we have decided to consider \emph{dedekind groups}, i.e. groups whose every subgroup is normal. Every abelian group is dedekind; non-abelian dedekind groups are also called \emph{hamiltonian groups}, and they have a simple characterization that is due to Baer, cf.\ \cite[Theorem 12.5.4]{Hall}.

\begin{theorem}
\label{thm:hamiltonian groups}
A finite group is hamiltonian if and only if it can be written as a direct product $Q_8\times A$,
where $Q_8$ is the group of quaternions and $A$ is an abelian group without elements of order $4$.
\end{theorem}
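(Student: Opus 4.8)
The plan is to prove the two implications separately. The \emph{backward} direction (``if'') is a direct verification that every group of the form $Q_8\times A$, with $A$ abelian and free of elements of order $4$, is non-abelian and Dedekind. The \emph{forward} direction (``only if'') reduces an arbitrary finite Dedekind group to its Sylow factors via nilpotency, and then rests on a classification of Dedekind $p$-groups.

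For the backward direction, non-abelianness is immediate from the $Q_8$ factor. To see that every subgroup is normal I would use the criterion $H\trianglelefteq G \iff [H,G]\le H$. Let $z$ denote the unique involution of $Q_8$ (the element $-1$), so that, since $A$ is central and abelian, one computes $[G,G]=[Q_8,Q_8]=\langle z\rangle$; moreover for $h=(q,a)\in H$ and any $g\in G$ the commutator $[h,g]$ lies in $\{1,(z,1)\}$ and is nontrivial only when $q\in\{\pm i,\pm j,\pm k\}$. It therefore suffices to show that such an $h$ forces $(z,1)\in H$, and this is exactly where the hypothesis on $A$ enters. Since $q^2=z$ we have $h^2=(z,a^2)\in H$, and because $A$ has no element of order $4$ the $2$-part of $a$ is an involution, so $a^2$ has odd order $d$; then
\[
(h^2)^{d}=(z^{d},a^{2d})=(z,1)\in H .
\]
Hence $[H,G]\le H$ and $H$ is normal.

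For the forward direction, let $G$ be a finite non-abelian Dedekind group. First I would note that subgroups of Dedekind groups are again Dedekind, and that since all subgroups are normal every Sylow subgroup is normal; thus $G$ is nilpotent and splits as the internal direct product $G=\prod_p P_p$ of its Sylow subgroups, each $P_p$ itself Dedekind. This reduces the theorem to classifying Dedekind $p$-groups, the target being: for odd $p$ every Dedekind $p$-group is abelian, while a non-abelian Dedekind $2$-group is isomorphic to $Q_8\times E$ with $E$ elementary abelian. Granting this, exactly one factor of $G$—necessarily the $2$-factor—is non-abelian, and absorbing $E$ together with all odd Sylow subgroups into a single group $A$ yields $G=Q_8\times A$; here $A$ is abelian and, because $E$ has exponent $2$ and the remaining factors have odd order, $A$ has no element of order $4$.

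The heart of the argument, and the step I expect to be the main obstacle, is the $p$-group classification together with the extraction of the complement. I would take non-commuting $a,b$ generating a subgroup $H$ of least order; normality of $\langle a\rangle$ and $\langle b\rangle$ gives $b^{-1}ab=a^{r}$ and $a^{-1}ba=b^{s}$, so $c=[a,b]$ lies in $\langle a\rangle\cap\langle b\rangle$, is central in $H$, and generates $H'$. A careful examination of these relations is the delicate core: for odd $p$ one can always exhibit a cyclic subgroup of $H$ that is not invariant under conjugation, contradicting the Dedekind property, so odd Dedekind $p$-groups are abelian; for $p=2$ the relations are rigid enough to force $a^{4}=1$, $a^{2}=b^{2}=c$ and $bab^{-1}=a^{-1}$, i.e.\ $H\cong Q_8$. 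I would then show $|G'|=2$ with $c$ central, observe that conjugation acts on $H$ only through inner automorphisms (the automorphisms of $Q_8$ fixing every cyclic subgroup are precisely the inner ones), and deduce $G=H\,C_G(H)$ with $C_G(H)$ abelian and $H\cap C_G(H)=\langle c\rangle$. Finally, the Dedekind property rules out any element of order $4$ in $C_G(H)$: an order-$4$ element $d$ centralizing $H$ would combine with a quaternion generator to give an element $ad$ whose cyclic subgroup is not normal. This simultaneously splits $\langle c\rangle$ off as a direct factor of $C_G(H)$ and guarantees the remaining abelian complement $A$ has no element of order $4$, producing $G\cong Q_8\times A$. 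The two points I anticipate being most technical are the minimality-based elimination of odd primes and the verification that the complement meets $Q_8$ trivially.
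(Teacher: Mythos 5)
The first thing to note is that the paper does not prove this statement at all: Theorem~\ref{thm:hamiltonian groups} is quoted as Baer's classical characterization of hamiltonian groups, with a pointer to Hall's book (Theorem 12.5.4), and is then used as a black box. So there is no proof in the paper to compare yours against; what you have attempted is the classical Dedekind--Baer theorem itself, along the standard textbook route. Within your proposal, the ``if'' direction is complete and correct: the criterion $H\trianglelefteq G\Leftrightarrow [H,G]\le H$, the observation that every commutator of $G=Q_8\times A$ lies in the subgroup generated by $(-1,1)$, and the trick of raising $h^2=(-1,a^2)$ to the odd order of $a^2$ (exactly where the absence of order-$4$ elements in $A$ enters) give a clean, self-contained argument. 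The reduction of the ``only if'' direction to Dedekind $p$-groups (all Sylow subgroups normal, hence $G$ nilpotent, hence the internal direct product of its Sylow subgroups, each again Dedekind) is also correct, as is the final assembly assuming the $p$-group classification.

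The genuine gap is that this $p$-group classification---which is the actual mathematical content of Baer's theorem---is asserted rather than proved. Concretely, two steps are missing. First, for odd $p$ you claim ``one can always exhibit a cyclic subgroup of $H$ that is not invariant under conjugation,'' but you never say which subgroup or why the relations $b^{-1}ab=a^r$, $a^{-1}ba=b^s$ produce one. Second, for $p=2$ you claim the relations ``are rigid enough to force $a^4=1$, $a^2=b^2=c$, $bab^{-1}=a^{-1}$,'' again with no derivation; the order analysis starting from $c=a^s=b^t$, $c^s=c^t=1$ and the minimality of $H$ is precisely the delicate computation in Hall's proof, and nothing in the sketch indicates how it goes. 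These are not routine verifications a reader can fill in mechanically---they are the theorem. Everything downstream is essentially fine, and in fact slightly redundant: your order-$4$ exclusion (the element $ad$ generating a non-normal cyclic subgroup) already shows that $C_G(H)$, being a $2$-group with no element of order $4$, has exponent $2$ and is therefore elementary abelian, so its abelianness need not be asserted separately. As written, then, the proposal is a correct roadmap of the standard proof with its two central steps left blank.
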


We provide sufficient and necessary conditions for integrality of the spectra of Cayley multigraphs over such groups (Theorem \ref{thm:hamiltonian case}).
By using this characterization, we show that integrality of Cayley graphs over hamiltonian groups is easy to decide in certain special cases, while it leads to challenging combinatorial problems in some other special cases. See Sections \ref{sect:Hamiltonian} and \ref{sect:special cases} for more details.

\section{$B$-integrality}

If ${\mathcal G}$ is a collection of graphs on the common vertex set $V$ and $B$ is an orthogonal basis of ${\mathbb C}^V$, then we say that ${\mathcal G}$ is $B$-\emph{integral\/} if for every $X$ in  ${\mathcal G}$, $B$ is a set of eigenvectors for $X$ and all eigenvalues of $X$ are integral. Equivalently, if $A(X)$ denotes the adjacency matrix of $X$, then $A(X)B = B\Lambda$, where $\Lambda$ is a diagonal matrix with integer entries (and $B$ is viewed as a matrix whose columns are the vectors from $B$). If $X$ and $Y$ are (simple) graphs on the same vertex set $V$, then we denote by $X\cup Y$ the simple graph on $V$ in which vertices $u,v$ are adjacent if and only if they are adjacent in $X$ or in $Y$ (or in both). For any family of graphs ${\mathcal G}$ on a common vertex set, we let $U({\mathcal G})$ be the closure of ${\mathcal G}$ under the operation $\cup$. We begin with an easy lemma.

\begin{lemma} \label{lem:5}
{\rm (a)} If\/ $X$ is $B$-integral and\/ $\bbone \in B$, then $\overline{X}$ is $B$-integral.

{\rm (b)} If\/ $X$ and $X \cap Y$ are $B$-integral, then $X \cap \overline{Y}$ is $B$-integral.

{\rm (c)} If\/ ${\mathcal G}$ is an intersection-closed family of $B$-integral graphs, then $U({\mathcal G})$ is $B$-integral.
\end{lemma}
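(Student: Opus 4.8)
The plan is to reduce each statement to a linear algebraic identity among adjacency matrices and then invoke a single trivial principle: if finitely many matrices all have every $b\in B$ as an eigenvector with integer eigenvalue, then so does any integer linear combination of them, the resulting eigenvalue being the corresponding integer combination. Throughout write $n=|V|$, let $I$ be the identity matrix, and let $J=\bbone\,\bbone^{\mathsf T}$ be the all-ones matrix, so that for a simple graph $X$ one has $A(\overline{X})=J-I-A(X)$.

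For part (a) I would start from this identity. Each $b\in B$ is already an eigenvector of $I$ (eigenvalue $1$) and, by hypothesis, of $A(X)$ (integer eigenvalue), so the only term needing attention is $J$. This is precisely where the hypotheses $\bbone\in B$ and orthogonality of $B$ are indispensable: since $J=\bbone\,\bbone^{\mathsf T}$, every $b\in B$ with $b\neq\bbone$ is orthogonal to $\bbone$ and hence lies in $\ker J$, while $J\bbone=n\,\bbone$. Thus $B$ is an eigenbasis for $J$ with integer eigenvalues ($n$ on $\bbone$ and $0$ elsewhere), and $A(\overline{X})=J-I-A(X)$ inherits $B$ as an eigenbasis with integer eigenvalues. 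For part (b) the key observation is combinatorial: in $X$ every edge either lies in $Y$ or does not, so the edge sets of $X\cap Y$ and $X\cap\overline{Y}$ partition that of $X$. At the level of $0$--$1$ adjacency matrices this reads $A(X)=A(X\cap Y)+A(X\cap\overline{Y})$, whence $A(X\cap\overline{Y})=A(X)-A(X\cap Y)$ is a difference of two $B$-integral matrices, and $X\cap\overline{Y}$ is $B$-integral.

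For part (c) I would expand a union by inclusion--exclusion at the level of edge indicators. A typical member of $U(\mathcal G)$ is $X_{1}\cup\cdots\cup X_{k}$ with each $X_{i}\in\mathcal G$, and because each adjacency entry is $0$ or $1$,
\[
A\!\left(\bigcup_{i=1}^{k}X_{i}\right)=\sum_{\emptyset\neq I\subseteq[k]}(-1)^{|I|+1}A\!\left(\bigcap_{i\in I}X_{i}\right).
\]
Since $\mathcal G$ is intersection-closed, every term $\bigcap_{i\in I}X_{i}$ again lies in $\mathcal G$ and is therefore $B$-integral, so the right-hand side is an integer combination of $B$-integral matrices and the union is $B$-integral.

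I expect the only genuinely delicate point to be part (a), where one must verify that $J$ is diagonalized by $B$; this is exactly the role of the hypotheses $\bbone\in B$ and the orthogonality of $B$, and without them $\overline{X}$ need not share the eigenbasis at all. Parts (b) and (c) are then routine once the correct identity (disjoint decomposition, respectively inclusion--exclusion) is in hand, with intersection-closedness of $\mathcal G$ the only hypothesis doing real work in (c), since it is what guarantees that every intersection term in the expansion is itself $B$-integral.
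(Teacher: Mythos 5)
Your proof is correct and takes essentially the same approach as the paper: every part reduces to writing the relevant adjacency matrix as an integer linear combination of matrices that all have $B$ as an eigenbasis with integer eigenvalues. The only cosmetic differences are that the paper packages your $A(\overline{X})=J-I-A(X)$ argument in (a) as an application of (b) with $K_n$ in the role of $X$ (noting $A(K_n)=J-I$ is diagonalized by $B$ precisely because $\bbone\in B$ and $B$ is orthogonal), and proves (c) from the binary identity $A(X\cup Y)=A(X)+A(Y)-A(X\cap Y)$ (implicitly iterated) instead of your full inclusion--exclusion expansion.
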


\begin{proof}
To prove (b), observe that $A(X\cap \overline{Y}) = A(X) - A(X\cap Y)$, thus
$$
   A(X\cap \overline{Y})B = A(X)B - A(X\cap Y)B = B\Lambda_1 - B\Lambda_2 = B(\Lambda_1-\Lambda_2).
$$
Since $\Lambda_1$ and $\Lambda_2$ are integral, so is their difference, hence $X \cap \overline{Y}$ is $B$-integral.

If $\bbone \in B$, then the complete graph $K_n$ is $B$-integral since $A(K_n)$ has eigenvalue $n-1$ with eigenvector $\bbone$ and eigenvalue $-1$ with eigenspace $\bbone^{\perp}$. Thus (a) follows from (b) by taking $K_n$ and $X$ playing the role of $X$ and $Y$, respectively.

By observing that $A(X\cup Y) = A(X) + A(Y) - A(X\cap Y)$, a proof similar to the above proof of (b) shows that (c) holds.
\end{proof}

For any set of graphs ${\mathcal G}$ on a common vertex set $V$, we let $ \B({\mathcal G})$ denote the set of all graphs
on $V$ which may be expressed using members of ${\mathcal G}$ and the operations $\cap$, $\cup$, and complement.

\begin{lemma} \label{lem:6}
Let ${\mathcal G}$ be an intersection-closed family of $B$-integral graphs and assume that $\bbone \in B$.  Then $\B({\mathcal G})$ is $B$-integral.
\end{lemma}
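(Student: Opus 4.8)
The plan is to reduce the claim to showing that each \emph{atom} of the boolean algebra $\B(\G)$ is $B$-integral, and then to assemble arbitrary members of $\B(\G)$ from atoms using edge-disjointness. The point to keep in mind throughout is that $B$-integrality is fragile: the intersection or union of two $B$-integral graphs need not be $B$-integral (these operations amount to entrywise products and sums of adjacency matrices, which need not respect a common eigenbasis), so the whole task is to arrange the boolean operations so that no non-$B$-integral graph is ever formed as an intermediate step. Since $V$ is finite there are only finitely many graphs on $V$, so we may assume $\G$ is finite, say with members $G_1,\dots,G_m$. Identifying each graph with its edge set inside $\binom{V}{2}$, the family $\B(\G)$ is exactly the (finite) boolean algebra generated by the edge sets of $G_1,\dots,G_m$.

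First I would record the atom decomposition. For each $P\subseteq\{1,\dots,m\}$, form $\bigcap_{i\in P}G_i \cap \bigcap_{j\notin P}\overline{G_j}$; the nonempty such graphs are the atoms, they are pairwise edge-disjoint, and every member of $\B(\G)$ is a union of atoms. For pairwise edge-disjoint graphs $X_1,\dots,X_t$ one has $A(X_1\cup\cdots\cup X_t)=A(X_1)+\cdots+A(X_t)$, because no edge is double-counted; hence if each $X_k$ is $B$-integral with $A(X_k)B=B\Lambda_k$, then $A(\bigcup_k X_k)B = B\bigl(\sum_k\Lambda_k\bigr)$ with $\sum_k\Lambda_k$ integral. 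Thus it suffices to prove that every atom is $B$-integral.

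To handle the atoms, I would enlarge $\G$ to $\G^+=\G\cup\{K_V\}$; since $K_V\cap H=H$, the family $\G^+$ is still intersection-closed, and $K_V$ is $B$-integral whenever $\bbone\in B$ (as observed in the proof of Lemma \ref{lem:5}(a)), so every member of $\G^+$ is $B$-integral. An atom then has the form $X\cap\overline{H_1}\cap\cdots\cap\overline{H_r}$, where $X$ is the intersection of the positively-signed generators (an element of $\G^+$, equal to $K_V$ when there are none) and $H_1,\dots,H_r\in\G$. I would prove the following by induction on $r$: for every $X\in\G^+$ and all $H_1,\dots,H_r\in\G$, the graph $X\cap\overline{H_1}\cap\cdots\cap\overline{H_r}$ is $B$-integral. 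The base case $r=0$ is the $B$-integrality of $X\in\G^+$. For the inductive step, set $Z=X\cap\overline{H_1}\cap\cdots\cap\overline{H_{r-1}}$, which is $B$-integral by the induction hypothesis. Applying Lemma \ref{lem:5}(b) with $Z$ in the role of $X$ and $H_r$ in the role of $Y$ shows that $Z\cap\overline{H_r}$ is $B$-integral provided $Z\cap H_r$ is. But $Z\cap H_r=(X\cap H_r)\cap\overline{H_1}\cap\cdots\cap\overline{H_{r-1}}$, and $X\cap H_r\in\G^+$ by intersection-closedness, so $Z\cap H_r$ again has the required form with only $r-1$ complemented factors and is $B$-integral by the induction hypothesis. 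Hence $Z\cap\overline{H_r}=X\cap\overline{H_1}\cap\cdots\cap\overline{H_r}$ is $B$-integral, completing the induction and showing that every atom is $B$-integral.

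Combining the two parts completes the proof. The crux, and the only place where the hypotheses are genuinely used, is keeping the induction invariant intact: intersection-closedness guarantees that the positive part $X$ (and each $X\cap H_r$) never leaves the $B$-integral family $\G^+$, so that Lemma \ref{lem:5}(b) can be applied repeatedly, while $\bbone\in B$ supplies $B$-integrality of $K_V$ and underlies the complementation step. I expect the main obstacle to be conceptual rather than computational: recognizing that one must never form $\cap$ or $\cup$ of two arbitrary $B$-integral graphs, and that the atom decomposition together with Lemma \ref{lem:5}(b) is exactly the device that threads every boolean expression through $B$-integral intermediates.
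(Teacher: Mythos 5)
Your proof is correct, and it takes a genuinely different route from the paper's. The paper stratifies $\B(\G)$ as $\bigcup_{k\ge 0}\G_k$, where $\G_{k+1}$ consists of all intersections of members of $\G_k$ and complements of members of $\G_k$, and inducts on $k$: a member $X_1\cap\cdots\cap X_\ell\cap\overline{Y_1}\cap\cdots\cap\overline{Y_m}$ of $\G_{k+1}$ is rewritten as $X'\cap\overline{\cup_{j}Y_j}$ with $X'=\cap_i X_i$, and then Lemma~\ref{lem:5}(c) (applied to $U(\G_k)$) supplies $B$-integrality of $X'\cap(\cup_j Y_j)=\cup_j(X'\cap Y_j)$, so that Lemma~\ref{lem:5}(b) finishes the inductive step. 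You instead pass to the atoms of the finite boolean algebra and handle unions by edge-disjointness---adjacency matrices of edge-disjoint graphs simply add---so you never need Lemma~\ref{lem:5}(c) at all; your only analytic ingredient is Lemma~\ref{lem:5}(b), iterated along an induction on the number of complemented factors, with intersection-closedness keeping the positive factor inside $\G^+$. Each approach has its merits: the paper's is shorter and avoids enumerating atoms or invoking finiteness of the family (harmless here, since the vertex set is finite), while yours isolates exactly where the hypothesis $\bbone\in B$ enters, namely only through the $B$-integrality of $K_V$, which is needed for atoms having no positive factor. Indeed, your explicit adjunction of $K_V$ handles a corner case that the paper's inductive step passes over silently: when $\ell=0$, i.e.\ $X$ is an intersection of complements only, the paper's $X'=X_1\cap\cdots\cap X_\ell$ is an empty intersection and does not lie in $\G_k$, so one must fall back on Lemma~\ref{lem:5}(a) (equivalently, on $K_V$)---precisely the point where $\bbone\in B$ is used. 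Your argument is, if anything, slightly more careful on that point.
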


\begin{proof}
Set ${\mathcal G}_0 = {\mathcal G}$ and for every $k \in {\mathbb N}$ recursively define
\[ {\mathcal G}_{k+1} = \{ X_1 \cap \cdots \cap X_n : \mbox{either $X_i \in {\mathcal G}_k$ or $\overline{X_i} \in {\mathcal G}_k$ for every $1 \le i \le n$} \}. \]
It is immediate that each ${\mathcal G}_k$ is intersection-closed and it follows from De Morgan's law that $\B({\mathcal G}) = \cup_{k=0}^{\infty} {\mathcal G}_k$.  To complete the proof, we shall show, by induction on $k$, that every graph in ${\mathcal G}_k$ is $B$-integral.
As a base, we observe that this holds for
$k=0$ by assumption.  For the inductive step, let $X$ be a graph in $G_{k+1}$ and suppose that
$X = X_1 \cap \cdots \cap X_{\ell} \cap \overline{Y_1} \cap \cdots \cap \overline{Y_m}$ where $X_1, \ldots, X_{\ell},Y_1, \ldots, Y_m \in {\mathcal G}_k$.  Then we have
\[ X = \left( \cap_{i=1}^{\ell} X_i \right) \cap \overline{ \left( \cup_{j=1}^m Y_j \right) }. \]
Since ${\mathcal G}_k$ is intersection-closed and $X'=X_1 \cap \cdots \cap X_{\ell} \in {\mathcal G}_k$ and $X'\cap (\cup_{j=1}^m Y_j) = ( \cup_{j=1}^m (X'\cap Y_j) \in U({\mathcal G}_k)$,
it follows from Lemma \ref{lem:5} that $X$ is $B$-integral, as desired.
\end{proof}

\begin{lemma}
\label{lem:commute}
Let $S$ and $T$ be inverse-closed multisets of a group $G$. If\/ $g\,T=Tg$ (equality holding as multisets) for every $g\in G$, then the adjacency matrices of Cayley multigraphs $\Cayley(G,S)$ and\/ $\Cayley(G,T)$ commute.
\end{lemma}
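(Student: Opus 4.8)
The plan is to recognize the hypothesis $gT = Tg$ as the statement that the group-algebra element attached to $T$ is \emph{central}, and then to transport commutativity through the regular representation. First I would translate the multiset identity into a pointwise condition on multiplicities: writing out $gT = Tg$ coordinate by coordinate gives $\mu_T(g^{-1}x) = \mu_T(xg^{-1})$ for all $g,x \in G$, and after the substitution $w = g^{-1}x$ (so that $xg^{-1} = gwg^{-1}$) this becomes
\[
  \mu_T(w) = \mu_T(gwg^{-1}) \qquad \text{for all } g,w \in G.
\]
In other words, the hypothesis is equivalent to $\mu_T$ being constant on each conjugacy class of $G$; that is, $T$ is a union of complete conjugacy classes, each occurring with a fixed multiplicity.

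Next I would move into the group algebra $\mathbb{C}[G]$. Recall that $(A_g)_{x,y}=1$ exactly when $xy^{-1}=g$, so $A_gA_h=A_{gh}$ and the map $\rho\colon \mathbb{C}[G]\to M_{|G|}(\mathbb{C})$ determined by $g\mapsto A_g$ (the left regular representation) is an algebra homomorphism. Setting $\hat S=\sum_{g}\mu_S(g)\,g$ and $\hat T=\sum_{g}\mu_T(g)\,g$, we have $\rho(\hat S)=A(\Cayley(G,S))$ and $\rho(\hat T)=A(\Cayley(G,T))$ by the very definition of the adjacency matrices. The first step now says precisely that $\hat T$ is a non-negative integer combination of conjugacy-class sums, and these sums span the center $Z(\mathbb{C}[G])$. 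Hence $\hat T$ is central, so $\hat S\hat T=\hat T\hat S$, and applying the homomorphism $\rho$ yields $A(\Cayley(G,S))\,A(\Cayley(G,T))=A(\Cayley(G,T))\,A(\Cayley(G,S))$, as desired. (If one prefers to avoid invoking the center, the same conclusion follows by computing the two products entrywise: both $(A_SA_T)_{x,z}$ and $(A_TA_S)_{x,z}$ reduce, after reindexing, to sums of the form $\sum_a \mu_S(a)\mu_T(a^{-1}c)$ versus $\sum_a\mu_S(a)\mu_T(ca^{-1})$ with $c=xz^{-1}$, and the class-function identity $\mu_T(a^{-1}c)=\mu_T(ca^{-1})$ matches them term by term.)

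I expect the only real obstacle to be the first step's bookkeeping: pinning down the correct left/right multiplication convention for $A_g$ and verifying that the multiset equality $gT=Tg$ for all $g$ is genuinely equivalent to the conjugation-invariance of $\mu_T$. Once that equivalence is in hand, the result is immediate, since commuting with a central element of the group algebra is automatic.
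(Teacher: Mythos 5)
Your proof is correct, but it takes a genuinely different route from the paper's. The paper proves the lemma by a direct entrywise computation: it expands $(A_SA_T)_{g,h}$ and $(A_TA_S)_{g,h}$ as sums of products of multiplicity functions, shuffles indices using the inverse-closedness of \emph{both} $S$ and $T$, and observes that equality of the two expressions reduces exactly to the hypothesis $\mu_{Tk}=\mu_{kT}$ for all $k$. You instead first reformulate the hypothesis $gT=Tg$ as conjugation-invariance of $\mu_T$ (your computation $\mu_T(w)=\mu_T(gwg^{-1})$ is correct), which says precisely that the group-algebra element $\hat T$ is a combination of conjugacy-class sums and hence lies in the center of $\mathbb{C}G$; commutation then follows by pushing the identity $\hat S\hat T=\hat T\hat S$ through the regular representation, whose homomorphism property $A_gA_h=A_{gh}$ you verified under the paper's convention $(A_g)_{x,y}=[xy^{-1}=g]$. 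Your argument buys two things the paper's does not: it never uses inverse-closedness (so it shows that hypothesis is needed only to make the Cayley multigraphs undirected, not for the commutation itself), and it explains conceptually \emph{why} the lemma holds, anticipating the representation-theoretic framework the paper only introduces later in Section 5 (where the identity $A=\sum_{s}\mu_S(s)[\rho_{reg}(s)]_G$ appears) and making the remark following the lemma---that unions of conjugacy classes work---immediate. What the paper's computation buys in exchange is self-containedness: Section 2 stays entirely elementary, with no appeal to group algebras or centers. Your parenthetical fallback (matching $\sum_a\mu_S(a)\mu_T(a^{-1}c)$ against $\sum_a\mu_S(a)\mu_T(ca^{-1})$ via the class-function identity) is essentially the paper's computation, but cleaner, since it substitutes conjugation-invariance for the paper's use of inverse-closedness.
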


\begin{proof}
Let $A_S$ and $A_T$ be the adjacency matrices of both Cayley multigraphs, and let $g,h\in G$. Since $S$ and $T$ are inverse-closed, we have
\begin{align*}
   (A_SA_T)_{g,h} &= \sum_{x\in G} \mu_S(gx^{-1})\mu_T(xh^{-1}) =
                    \sum_{x\in G} \mu_{Sg}(x)\mu_{Th}(x)\\[2mm]
      &= \sum_{x\in G} \mu_{Sg}(xg)\mu_{Th}(xg) =
         \sum_{x\in G} \mu_{S}(x)\mu_{Thg^{-1}}(x).
\end{align*}
Taking a similar expression for $A_TA_S$ and using the fact that $S$ and $T$ are inverse-closed, we derive
\begin{align*}
   (A_TA_S)_{g,h} &= \sum_{x\in G} \mu_{Tgh^{-1}}(x)\mu_{S}(x) =
                    \sum_{x\in G} \mu_{Tgh^{-1}}(x^{-1})\mu_{S}(x^{-1}) \\[2mm]
      &= \sum_{x\in G} \mu_{hg^{-1}T}(x)\mu_{S}(x).
\end{align*}
To obtain equality for every $g$ and $h$, it suffices to see that $\mu_{Thg^{-1}}(x) = \mu_{hg^{-1}T}(x)$ for every $g,h,x\in G$; equivalently,
$\mu_{Tk}(x) = \mu_{kT}(x)$ for every $k,x\in G$. But this is precisely our assumption that $Tk=k\,T$.
\end{proof}

Lemma \ref{lem:commute} implies that the adjacency matrices of all Cayley multigraphs $\Cayley(G,T)$, where $T$ is any normal subgroup of $G$ (or any other union of conjugacy classes), commute. Therefore, they have a common set of eigenvectors.

\bigskip

\noindent{\it Proof of Theorem \ref{thm:sufficiency}. }
Let ${\mathcal G} = \{ \Cayley(G,H) : H \vartriangleleft G \}$.  By Lemma \ref{lem:commute}, the adjacency matrices of all graphs in ${\mathcal G}$ commute and hence they have a common orthogonal set $B$ of eigenvectors. For every $X \in {\mathcal G}$ we have that $X$ has $B$ as a
basis of eigenvectors, and $X$ is a disjoint union of cliques (with loops at every vertex), so $X$ is $B$-integral.  It now follows from Lemma \ref{lem:6} that
$\B({\mathcal G}) = \{ \Cayley(G,S) : S \in  \B(G) \}$ is $B$-integral.
Since the adjacency matrix $A_T$ of each multigraph $\Cayley(G,T)$, $T\in {\mathcal C}(G)$, is an integral linear combination of adjacency matrices of $\Cayley(G,S)$, $S\in \B(G)$, also $A_T$ is $B$-integral. This completes the proof.
\hfill$\Box$

\section{The boolean algebra of normal subgroups}

In this short section, we shall describe the atoms of $\B(G)$.
For an element $x\in G$, let $N(x)$ be the intersection of all normal subgroups containing $x$. For $x,y\in G$, define $x\sim y$ if $N(x)=N(y)$.
This equivalent to the condition that $y\in N(x)$ and $x\in N(y)$.
Clearly, $\sim$ is an equivalence relation in $G$.

\begin{proposition}
\label{prop:atoms}
The equivalence classes of\/ $\sim$ are precisely the atoms of the boolean algebra $\B(G)$, and therefore $S\in \B(G)$ if and only if\/ $S$ is the union of such equivalence classes, and a multiset is in $\C(G)$ if and only if it is a multiset generated by the $\sim$-classes.
\end{proposition}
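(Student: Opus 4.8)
The plan is to identify the atoms of $\B(G)$ with the equivalence classes of $\sim$ by appealing to the standard description of the atoms of a boolean algebra of sets generated by a finite family. Since $G$ is finite it has only finitely many normal subgroups $N_1,\dots,N_m$, and $\B(G)$ is the boolean algebra they generate (note that $G$ itself is among them, so complements are available, writing $G\setminus A = A\,\triangle\,G$). For such an algebra the atoms are exactly the nonempty ``cells'' $\bigcap_{i=1}^m N_i^{\epsilon_i}$ with $\epsilon_i\in\{0,1\}$, where $N_i^1=N_i$ and $N_i^0=G\setminus N_i$; equivalently, $x$ and $y$ lie in the same cell precisely when they share the same membership pattern, i.e.\ $x\in N\iff y\in N$ for every normal subgroup $N$. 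The whole problem therefore reduces to showing that this ``same membership pattern'' relation coincides with $\sim$.

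To do this I would first record an elementary observation about $N(x)$: being an intersection of normal subgroups each containing $x$, it is itself a normal subgroup containing $x$, in fact the smallest normal subgroup containing $x$. Consequently, for every normal subgroup $N$,
\[
   x\in N \iff N(x)\subseteq N .
\]
Indeed, if $x\in N$ then $N$ appears among the subgroups intersected to form $N(x)$, so $N(x)\subseteq N$; conversely $N(x)\subseteq N$ together with $x\in N(x)$ gives $x\in N$. This identity says that the entire membership pattern of $x$ is encoded in the single subgroup $N(x)$.

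From this the two relations coincide. If $N(x)=N(y)$, then for every $N$ we have $x\in N\iff N(x)\subseteq N\iff N(y)\subseteq N\iff y\in N$, so $x$ and $y$ share a membership pattern; conversely, if $x$ and $y$ have the same membership pattern, then the families of normal subgroups containing them agree, whence $N(x)=N(y)$. Thus the cells are exactly the $\sim$-classes, and the atoms of $\B(G)$ are exactly the equivalence classes of $\sim$. The two ``therefore'' clauses are then formal: in any boolean algebra of sets every element is the disjoint union of the atoms it contains, giving $S\in\B(G)$ if and only if $S$ is a union of $\sim$-classes; and the statement about $\C(G)$ is immediate from its definition as the set of non-negative integer combinations of the atoms.

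I do not expect a serious obstacle. The only thing requiring care is the bookkeeping in the standard atom description and checking that complements are legitimately available (they are, since $G\in\B(G)$). The genuinely useful step, which I would isolate as the displayed equivalence $x\in N\iff N(x)\subseteq N$, is what cleanly converts ``same membership pattern'' into ``$N(x)=N(y)$'' and drives the entire argument.
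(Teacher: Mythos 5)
Your proof is correct and follows essentially the same route as the paper's: both rest on the observation that the family of normal subgroups containing $x$ is encoded by $N(x)$ (your displayed equivalence $x\in N \iff N(x)\subseteq N$), so that $x\sim y$ exactly when no normal subgroup---and hence no set in $\B(G)$---separates $x$ from $y$. The only organizational difference is that you invoke the standard description of the atoms of a finitely generated boolean set algebra as membership cells, whereas the paper derives the needed closure property inline and separates non-equivalent elements directly using the set $N(y)\in\B(G)$.
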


\begin{proof}
Suppose that $x\sim y$. Then $x\in N(y)$ and $y\in N(x)$. Thus, every normal subgroup of $G$ either contains both or neither of these two elements. This property is preserved by taking unions, intersections and complements. Thus, every $S\in \B(G)$ is the union of equivalence classes of $\sim$.

Suppose now that $x\not\sim y$. If $x\in N(y)$ and $y\in N(x)$, then $N(x)=N(y)$. Thus, we may assume that $x\notin N(y)$. Since $N(y)$ is in $\B(G)$, the atom of $\B(G)$ containing $y$ does not contain $x$. This shows that each atom is an equivalence class of $\sim$.
\end{proof}

\section{Abelian groups}

Throughout this section, $G$ will always be a (finite) abelian group.
Let $G^*$ denote the dual group of $G$, consisting of all complex characters, i.e.,
homomorphisms from $G$ to (the multiplicative group of) ${\mathbb C}$.  It is well
known that $G^*$ is a group under pointwise multiplication, and that $G^* \cong G$.
We define $F$ to be the matrix indexed by $G^* \times G$ and given by the rule that for $\alpha \in G^*$ and $x \in G$ we have
$F_{\alpha,x} = \alpha(x)$.  Note that each row of $F$ is a character.  Furthermore, it follows from the fact that the characters form an orthogonal
basis that $F F^* = nI$, where $F^*$ is the conjugate transpose of $F$ and $n=|G|$.
Finally, observe that if $r$ is the exponent of $G$, then every element of $F$ is an $r^{\mathrm{th}}$ root of unity.

In the remainder, for any vector $v \in {\mathbb C}^I$ (where $I$ is a non-empty index set) and any $n \in {\mathbb Z}$, we let $v^n$ denote the vector in ${\mathbb C}^I$ given by coordinate-wise exponentiation, i.e., $(v^n)_i = (v_i)^n$ for each $i\in I$.

\begin{observation}
\label{obs:1}
Let $x,y \in G$ and let $F_x, F_y$ denote the column vectors of $F$ indexed by $x$ and $y$, respectively.  If $x \sim y$, then there exist
integers $j,k \in {\mathbb Z}$ so that $(F_x)^j = F_y$ and $(F_y)^k = F_x$.
\end{observation}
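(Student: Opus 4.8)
The plan is to reduce the two coordinate-wise identities about the columns $F_x$ and $F_y$ to multiplicative relations among $x$ and $y$ in the group, exploiting the homomorphism property of characters. First I would record the effect of coordinate-wise exponentiation on a column of $F$: for any integer $j$ and any $\alpha \in G^*$, the $\alpha$-entry of $(F_x)^j$ is $(\alpha(x))^j = \alpha(x^j)$, since each row $\alpha$ is a homomorphism. Hence $(F_x)^j = F_{x^j}$, and the claimed equalities $(F_x)^j = F_y$ and $(F_y)^k = F_x$ are equivalent to $F_{x^j} = F_y$ and $F_{y^k} = F_x$. Because the characters separate the elements of $G$ (indeed $FF^* = nI$ forces $F$ to be invertible, so its columns are pairwise distinct), these are in turn equivalent to the group equalities $x^j = y$ and $y^k = x$.

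So the task reduces to finding integers $j,k$ with $y = x^j$ and $x = y^k$, and here I would invoke the description of $\sim$ recalled just before the observation. Since $G$ is abelian, every subgroup is normal, so for any $z \in G$ the subgroup $N(z)$ — the intersection of all normal subgroups containing $z$ — is exactly the cyclic subgroup $\langle z\rangle$ generated by $z$. Now $x \sim y$ means $N(x) = N(y)$, equivalently (as already noted in the text) $y \in N(x)$ and $x \in N(y)$; that is, $y \in \langle x\rangle$ and $x \in \langle y\rangle$. By the definition of the cyclic subgroup, $y \in \langle x\rangle$ yields $y = x^j$ for some $j \in \mathbb{Z}$, and $x \in \langle y\rangle$ yields $x = y^k$ for some $k \in \mathbb{Z}$, which is precisely what is needed.

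There is essentially no hard step here; the only point requiring care is the bookkeeping that converts coordinate-wise exponentiation into evaluation of a character at a power of a group element, which is immediate from $\alpha$ being a homomorphism. Chaining the two observations then gives $(F_x)^j = F_{x^j} = F_y$ and $(F_y)^k = F_{y^k} = F_x$, completing the argument.
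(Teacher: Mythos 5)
Your proposal is correct and follows essentially the same route as the paper: both reduce the claim to producing integers $j,k$ with $y=x^j$ and $x=y^k$ (which for abelian $G$ follows from $N(z)=\langle z\rangle$, so $x\sim y$ gives $\langle x\rangle=\langle y\rangle$), and then use the homomorphism property $\alpha(x^j)=(\alpha(x))^j$ to translate this into the column identities $(F_x)^j=F_y$ and $(F_y)^k=F_x$. The only difference is that you spell out the step the paper leaves implicit, namely why $x\sim y$ yields such $j,k$; your extra remark about invertibility of $F$ is not needed for the stated (one-directional) implication.
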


\begin{proof}
Since $x \sim y$, we may choose $j,k \in {\mathbb Z}$ so that $x^j = y$ and $y^k = x$.  Now, for any character $\alpha \in G^*$ we have
$\alpha(y) = \alpha(x^j) = (\alpha(x))^j$ and it follows that $F_y = (F_x)^j$.  A similar argument shows that $F_x = (F_y)^k$.
\end{proof}

\begin{lemma}
\label{lemma7}
Let\/ $v \in {\mathbb Q}^G$. If\/ $Fv \in {\mathbb Q}^{G^*}$, then for every $x,y \in G$ with $x \sim y$, we have $v_x = v_y$.
\end{lemma}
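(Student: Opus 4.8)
The plan is to exploit the Galois action of $\mathrm{Gal}(\QQ(\zeta)/\QQ)$ on the cyclotomic field containing all entries of $F$, mirroring So's argument in the cyclic case. Let $r$ be the exponent of $G$ and $\zeta = e^{2\pi i/r}$, so that every entry $\alpha(x)$ of $F$ is a power of $\zeta$ and hence lies in $K = \QQ(\zeta)$. The Galois group $\mathrm{Gal}(K/\QQ)$ is isomorphic to $(\ZZ/r\ZZ)^\ast$, the automorphism $\sigma_j$ corresponding to $j$ acting by $\sigma_j(\zeta) = \zeta^j$. Each coordinate $(Fv)_\alpha = \sum_{x\in G} \alpha(x)\,v_x$ lies in $K$, and the hypothesis $Fv \in \QQ^{G^\ast}$ says precisely that every such coordinate is fixed by every $\sigma_j$.

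Next I would compute the effect of $\sigma_j$ on a coordinate. Since $v_x \in \QQ$ is fixed and $\sigma_j(\alpha(x)) = \alpha(x)^j = \alpha(x^j)$, applying $\sigma_j$ gives
\[ \sigma_j\bigl((Fv)_\alpha\bigr) = \sum_{x\in G}\alpha(x^j)\,v_x. \]
For $\gcd(j,r)=1$ the map $x\mapsto x^j$ is a bijection of $G$ (its inverse is $x \mapsto x^{j^\ast}$, where $j^\ast j \equiv 1 \pmod r$), so reindexing the sum rewrites the right-hand side as $(F\tilde v)_\alpha$, where $\tilde v$ is the vector obtained from $v$ by this permutation, i.e. $\tilde v_x = v_{x^{j^\ast}}$. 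The fixed-point condition then reads $F\tilde v = Fv$, and since $FF^\ast = nI$ shows $F$ is invertible, we conclude $\tilde v = v$. As $j$ (equivalently $j^\ast$) ranges over all residues coprime to $r$, this yields the key invariance $v_{x^m} = v_x$ for every $x\in G$ and every integer $m$ with $\gcd(m,r)=1$.

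Finally I would translate this into the statement of the lemma. Because $G$ is abelian, $N(x) = \langle x\rangle$, so $x\sim y$ means exactly that $\langle x\rangle = \langle y\rangle$; writing $d$ for the common order, there is an $m$ with $\gcd(m,d)=1$ and $y = x^m$. The one point requiring care --- and the main obstacle --- is that the Galois argument only supplies invariance for exponents coprime to the full exponent $r$, whereas a priori $m$ is only coprime to $d$. I would resolve this with the surjectivity of the reduction map $(\ZZ/r\ZZ)^\ast \to (\ZZ/d\ZZ)^\ast$ (valid since $d \mid r$): choose $m'$ coprime to $r$ with $m' \equiv m \pmod d$, so that $x^{m'} = x^m = y$ and hence $v_y = v_{x^{m'}} = v_x$ by the invariance established above. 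This completes the argument.
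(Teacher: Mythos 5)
Your proof is correct, and it is a genuine variant of the paper's argument rather than a reproduction of it. Both proofs rest on the same arithmetic input --- all primitive roots of unity of a given order are conjugate over $\QQ$ (equivalently, cyclotomic polynomials are irreducible) --- but the mechanics differ. The paper argues pointwise: for a fixed pair $x \sim y$ it works in $\QQ(\omega)$, where $\omega$ is a primitive $\ell^{\,\mathrm{th}}$ root of unity and $\ell$ is the common order of the entries of $F_x$ and $F_y$; it writes $F_x \cdot u - nv_x$ (with $u = Fv$) as a rational polynomial vanishing at $\omega$, and invokes irreducibility of $\Phi_\ell$ to conclude that it also vanishes at $\omega^j$, where $F_y = (F_x)^j$, which yields $v_y = v_x$ directly. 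You instead let the full Galois group $\mathrm{Gal}(\QQ(\zeta_r)/\QQ)$ act on the system $Fv$, use invertibility of $F$ to convert the fixed-point condition into the global invariance $v_{x^m} = v_x$ for all $m$ with $\gcd(m,r)=1$, and then specialize to the pair $x,y$. Your route proves a stronger and conceptually cleaner intermediate statement ($v$ is constant on the orbits of the power maps, which is essentially the statement that $v$ is constant on the atoms of $\B(G)$), but it forces you to bridge the mismatch between exponents coprime to $r$ and exponents coprime to $d = \mathrm{ord}(x)$; you correctly identify this as the crux and resolve it with the surjectivity of the reduction map $(\ZZ/r\ZZ)^{\ast} \to (\ZZ/d\ZZ)^{\ast}$ for $d \mid r$ --- a standard fact, though one you should cite or prove (it follows, e.g., from the Chinese Remainder Theorem). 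The paper's pointwise formulation sidesteps that lemma entirely by working with the order of $x$ from the start: that is exactly what choosing $\ell$ minimal, and noting $\gcd(j,\ell)=1$, accomplishes.
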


\begin{proof}
Let $F_x$ and $F_y$ denote the column vectors of $F$ indexed by $x$ and $y$ and let $\ell$ $(m)$ be the smallest integer so that
every term of $F_x$ $(F_y)$ is a $\ell^{\,\mathrm{th}}$ $(m^{\mathrm{th}})$ root of unity.  It follows from Observation \ref{obs:1} that $\ell = m$.  Now, fix
a primitive $\ell^{\,\mathrm{th}}$ root of unity $\omega$ and express each entry of $F_x$ and $F_y$ in the form $\omega^i$ for some $i \in \{0,1,\ldots,\ell-1\}$.
Using this interpretation, and recalling that $u := Fv$ is rational, we obtain an expression for the inner product of $F_x$ and $u$ as
\[ F_x \cdot u = (F^*u)_x = \sum_{i=0}^{\ell-1} a_i \, \omega^i \]
where each $a_i \in {\mathbb Q}$. Note that $F_x \cdot u = (F^*u)_x = nv_x$. Now, let $P(z) \in {\mathbb C}[z]$ denote the polynomial $P(z) = \sum_{i=0}^{\ell-1} a_i z^i - nv_x$.
Observe that $P(\omega)=0$.  Next, choose $j \in \{0,1,\ldots,\ell-1\}$ so that $F_y = (F_x)^j$.  Note that $\gcd(j,l)=1$.  We may assume $x \neq y$, as otherwise there is nothing to prove. It follows that $j \ge 2$, so $\ell \ge 3$.  The polynomial $P$ has rational coefficients and has $\omega$ as a root.  It follows from this and the fact that the  polynomial
\[\Phi_{\ell}(z) = \prod_{i \in \{1..\ell\} : \gcd(i,\ell) =1} (z - \omega^i)\]
is irreducible over ${\mathbb Q}$, that $\omega^j$ is also a root of $P$. But then we have
\[ 0 = P(\omega^j) = \sum_{i=0}^{\ell-1} a_i \omega^{ij} - nv_x = F_y \cdot u - nv_x \]
which implies that $v_y = \tfrac{1}{n} F_y \cdot u = v_x$ as desired.
\end{proof}

We are ready to give a proof of Theorem \ref{thm:Bridges and Mena}. Observe that sufficiency part follows from Theorem \ref{thm:sufficiency}.

\bigskip

\noindent{\it Proof of Theorem \ref{thm:Bridges and Mena}, necessity. }
It is well known that for every $S \subseteq G$ the Cayley graph $\Cayley(G,S)$ has each character $\alpha \in G^*$ as an eigenvector
with eigenvalue $\alpha(S) = \sum_{g \in S} \alpha(g)$.  Alternately, if we view $\alpha$ as a vector and let $\bbone_S \in {\mathbb C}^G$ denote the characteristic vector of $S$, this eigenvalue may be written as $\alpha \cdot \bbone_S$.
Suppose that $\Cayley(G,S)$ is integral. Then we have $\alpha \cdot \bbone_S \in {\mathbb Q}$ for every $\alpha \in G^*$.
Equivalently, $F\/\bbone_S$ is rational-valued.  But then, it follows from the previous lemma that whenever $x,y \in G$ satisfy
$x \sim y$, we have $(\bbone_S)_x = (\bbone_S)_y$.  By Proposition \ref{prop:atoms}, this implies that $S \in \B(G)$, as desired.
\hfill$\Box$

\section{Hamiltonian groups}
\label{sect:Hamiltonian}

Let $\H$ be the family of groups of the form $Q_8 \times A$ where $A$ is a finite abelian group and $Q_8$ is the quaternion group represented as follows:
$$Q_8=\langle -1,i,j,k \mid (-1)^2=1, i^2=j^2=k^2=ijk=-1\rangle.$$

Let us recall that a finite group $G$ is {\em hamiltonian} if it is non-abelian and every subgroup of $G$ is normal. By a well-known result of Baer (Theorem \ref{thm:hamiltonian groups}), every hamiltonian group is in $\H$.

In this section, we obtain a necessary and sufficient condition for a multigraph $\Cayley(G,S)$ to be integral, where $G \in \H$ and $S \subseteq G$ is an inverse-closed multiset of elements of $G$.

We require some definitions and notation from representation theory. For a more detailed account the reader is referred to \cite{FH,JL}.

In what follows, let $G$ be a finite group. We denote by $\mathbb{C}G$ its group algebra. That is, $\mathbb{C}G$ is the vector space over $\mathbb{C}$ with basis $G$ and multiplication defined by extending the group multiplication linearly.
Identifying $\sum_{g \in G} a_g g$ with the function $g \mapsto a_g$, we can view the vector space $\mathbb{C}G$ as the space of all $\mathbb{C}$-valued functions on $G$.
We sometimes identify a subset or a multiset $M$ of $G$ with the element
$\sum_{g\in G} \mu_M(g) g$ of the group algebra $\mathbb{C}G$.

Let $V$ be an $n$-dimensional vector space over $\mathbb{C}$. A \emph{complex representation} (or simply a \emph{representation}) of $G$ on $V$ is a group homomorphism $\rho: G \rightarrow GL(V)$, where $GL(V)$ denotes the group of invertible endomorphism of $V$. The \emph{degree} of representation is the dimension of $V$.
Two representations $\rho_1$ and $\rho_2$ of $G$ on $V_1$ and $V_2$, respectively, are \emph{equivalent} (written $\rho_1 \cong \rho_2$) if there is a linear isomorphism $T: V_1 \to V_2$ such that for every $g\in G$ we have $T\rho_1(g) = \rho_2(g)T$.

If $\rho$ is a representation of $G$ then the \emph{character} $\chi_{\rho}$ induced by $\rho$, is the linear functional $\chi_{\rho}: \mathbb{C}G\rightarrow \mathbb{C}$ defined by
$$\chi_{\rho}(g)= {\rm tr}(\rho(g)),\quad g \in G,$$
and extended by linearity to $\mathbb{C}G$.  (The trace ${\rm tr} (\alpha)$ of a linear map $\alpha$ is the trace of any matrix representing $\alpha$ according to some basis.)
The \emph{degree} of the character $\chi_{\rho}$ is the degree of  $\rho$, and is equal to $\chi_{\rho}(1)$. A character of degree one is called a \emph{linear character}. The character $\chi$ which assigns $1$ to every element of a group $G$ is called the \emph{principal character} of $G$ and is denoted by ${\mathbbm 1}_G$.

The $|G|$-dimensional representation $\rho_{reg}:G \rightarrow GL(\mathbb{C}G)$ defined by $\rho_{reg}(g)(x)=gx$ ($g \in G, x \in \mathbb{C}G)$) is called the
\emph{left-regular representation}. Choosing $G$ as a basis for $\mathbb{C}G$, we see that for every $g \in G$, $[\rho_{reg}(g)]_G$ (the matrix representing the linear map $\rho_{reg}(g)$ according to the basis $G$) is the $(|G| \times |G|)$-matrix, indexed with the elements of $G$, such that for every $k,h \in G$

\[([\rho_{reg}(g)]_G)_{h,k} = \left\{
\begin{array}{l l}
  1 & \quad \mbox{if $h=gk$ }\\
  0 & \quad \mbox{otherwise.}
\end{array} \right. \]

This gives a natural link to Cayley graphs since the adjacency matrix of a Cayley multigraph $\Cayley(G,S)$ can be written as
\begin{equation}
    A = \sum_{s\in S}\ \mu_S(s)[\rho_{reg}(s)]_G.
\label{eq:adjacency regular representations}
\end{equation}

Let $\rho: G \rightarrow GL(V)$ be a representation. A subspace $W$ of $V$ is said to be $\rho$\emph{-invariant}, if $\rho(g)w \in W$ for every $g \in G$ and $w \in W$.  If $W$ is a $\rho$-invariant subspace of $V$, then the restriction of $\rho$ to $W$, that is $\rho|_W:G \rightarrow GL(W)$, is a representation of $G$ on $W$. If $V$ has no $\rho$-invariant subspaces, then  $\rho$ is said to be an \emph{irreducible representation} for $G$ and the corresponding character $\chi_{\rho}$ an \emph{irreducible character} for $G$.

For a group $G$, we  denote by ${\rm IRR}(G)$ and ${\rm Irr}(G)$  a complete set of  non-equivalent irreducible representations of $G$ and the complete set of  non-equivalent irreducible characters of $G$, respectively. Note that ${\rm IRR}(G)$ is not necessarily unique, but ${\rm Irr}(G)$ is unique.
If $G$ is abelian, then every irreducible representation $\rho\in {\rm IRR}(G)$ is 1-dimensional and thus it can be identified with its character $\chi_\rho\in {\rm Irr}(G)$.

The following result was observed by many authors (see for example~\cite{DS}).

\begin{lemma}
\label{lemma:eig}
Let\/ $G$ be a group and let $S\subseteq G$ be a multiset of elements of $G$. Let\/ ${\rm IRR}(G)=\{\rho_1,\dots,\rho_k\}$. For\/ $t=1,\dots,k$, let $d_t$ be the degree of $\rho_t$, and let $\Lambda_t$ be the multiset of eigenvalues of the matrix $\sum_{g \in S}\mu_S(g)\rho_t(g)$. Then the following holds:
\begin{enumerate}
\item[\rm (1)] The set of eigenvalues of\/ $\Cayley(G,S)$ equals
$\cup_{t=1}^k \Lambda_t$.
\item[\rm (2)] If the eigenvalue $\lambda$ occurs with multiplicity $m_t(\lambda)$ in
$\sum_{g \in S}\mu_S(g)\rho_t(g)$ $(1\le t\le k)$, then the multiplicity of $\lambda$ in $\Cayley(G,S)$ is $\sum_{t=1}^k d_t m_t(\lambda)$.
\end{enumerate}
\end{lemma}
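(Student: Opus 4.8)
The plan is to exploit the decomposition of the left-regular representation into irreducibles. Recall the standard fact from representation theory that $\rho_{reg}$ is equivalent to $\bigoplus_{t=1}^{k} d_t\,\rho_t$; that is, each $\rho_t$ occurs in $\rho_{reg}$ with multiplicity equal to its degree $d_t$. Concretely, this means there is a single invertible matrix $P$ such that for every $g\in G$ simultaneously
\[
   P^{-1}[\rho_{reg}(g)]_G\,P = \bigoplus_{t=1}^{k}\bigl(\underbrace{\rho_t(g)\oplus\cdots\oplus\rho_t(g)}_{d_t\ \text{copies}}\bigr),
\]
a block-diagonal matrix in which the block $\rho_t(g)$ appears $d_t$ times.

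First I would apply this fixed change of basis to the adjacency matrix $A$ from \eqref{eq:adjacency regular representations}. Since $P$ is independent of $g$, conjugation distributes over the linear combination term by term, giving
\[
   P^{-1}A\,P = \sum_{s\in S}\mu_S(s)\,P^{-1}[\rho_{reg}(s)]_G\,P
             = \bigoplus_{t=1}^{k}\bigl(\underbrace{M_t\oplus\cdots\oplus M_t}_{d_t\ \text{copies}}\bigr),
\]
where $M_t := \sum_{g\in S}\mu_S(g)\rho_t(g)$ is precisely the matrix whose eigenvalue multiset is $\Lambda_t$. Thus $A$ is similar to a block-diagonal matrix in which each block $M_t$ occurs $d_t$ times.

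Since similar matrices share the same characteristic polynomial, the multiset of eigenvalues of $A$ equals that of the block-diagonal matrix above, namely the disjoint union over $t$ of $d_t$ copies of $\Lambda_t$. The set of distinct eigenvalues is therefore $\cup_{t=1}^{k}\Lambda_t$, which proves (1). For multiplicities, if $\lambda$ occurs $m_t(\lambda)$ times in $M_t$, then it occurs $d_t\,m_t(\lambda)$ times among the $d_t$ copies of $M_t$; summing over $t$ yields multiplicity $\sum_{t=1}^{k} d_t\,m_t(\lambda)$ in $A$, which proves (2).

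The one point demanding care --- and the crux of the argument --- is that a single matrix $P$ block-diagonalizes every $\rho_{reg}(g)$ at once; this is exactly the content of the equivalence $\rho_{reg}\cong\bigoplus_{t} d_t\rho_t$, and it is what allows conjugation to pass through the sum over $S$ without altering $P$. Notably, no inverse-closedness of $S$ is used here, so $A$ need not be symmetric and its eigenvalues may a priori be complex; the reasoning is purely about similarity of matrices and is unaffected.
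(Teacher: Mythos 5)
Your proof is correct. The paper gives no proof of this lemma at all---it is stated as a known observation with a citation to Diaconis and Shahshahani \cite{DS}---and your argument, simultaneous block-diagonalization of the regular representation via the standard decomposition $\rho_{reg}\cong\bigoplus_{t=1}^{k}d_t\,\rho_t$ applied to the expression (\ref{eq:adjacency regular representations}) for the adjacency matrix, is exactly the standard proof underlying that reference (including your correct remark that only algebraic multiplicities and similarity are needed, so inverse-closedness of $S$ plays no role).
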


The table below is the character table of $Q_8$.

\begin{center}
\begin{tabular}{c|ccccc}
$g \in Q_8$ & $1$ & $-1$ & $i$& $j$& $k$\\
\hline
$cl(g)$ & $\{1\}$& $\{-1\}$ & $\{i, -i\}$ & $\{j, -j\}$ & $\{k, -k\}$ \\
\hline
${\mathbbm 1}_{Q_8}$& 1&1&1&1&1\\
$\lambda_i$ & 1&1&1&-1&-1\\
$\lambda_j$& 1&1&-1&1&-1\\
$\lambda_k$ & 1&1&-1&-1&1\\
$\varepsilon$ & 2 & -2& 0&0&0\\
\end{tabular}
\end{center}
where $\varepsilon$ is the character afforded by the representation $\rho_{\varepsilon}$ defined below:
\newcommand\ii{{\tt i}}
$$\rho_{\varepsilon}(1)= I, \hspace{1cm}
\rho_{\varepsilon}(i) = \left( \begin{array}{cc}\ii&0\\0&-\ii\\ \end{array} \right), \hspace{1cm}
\rho_{\varepsilon}(j)= \left(\begin{array}{cc}0&1\\ -1 &0 \\ \end{array} \right), \hspace{1cm}
\rho_{\varepsilon}(k)= \left( \begin{array}{cc} 0&\ii \\ \ii&0 \\ \end{array} \right)$$
where the value $\ii$ appearing in the matrices is the complex imaginary unit $\sqrt{-1}$. Also note that $\rho_{\varepsilon}(-g)= -\rho_{\varepsilon}(g)$ for every $g \in Q_8$ and that ${\rm IRR}(Q_8) = \{ {\mathbbm 1}_{Q_8}, \lambda_i, \lambda_j, \lambda_k,\rho_{\varepsilon}\}$.

Let $G= Q_8 \times A$, where $A$ is an abelian group, and let $S \subseteq G$ be an inverse-closed multiset of elements of $G$. For every $q \in Q_8$, let $B_q$ be the multiset
\begin{equation}
  B_q=\{a\in A\mid (q,a)\in S\}
\label{eq:Bq}
\end{equation}
in which the multiplicity of $a\in B_q$ is equal to the multiplicity of $(q,a)$ in $S$.

Since $S$ is inverse-closed, we have $B_1=B_1^{-1}$, $B_{-1}=B_{-1}^{-1}$, and $B_{-q}=B_q^{-1}$ for every $q \in Q_8 \setminus  \{1, -1\}$. In particular, this implies that $\lambda(B_{-q}) = \overline{ \lambda(B_q)}$, for every $\lambda \in {\rm Irr}(A)$.
For every multiset $D$ of elements of $A$, we define
$$
   \widehat{\lambda}(D) = \lambda(D) - \lambda(D^{-1}) =
   \sum_{g\in D}\mu_D(g)(\lambda(g)-\lambda(g^{-1}))=
   \sum_{g\in D}\mu_D(g)(\lambda(g)-\overline{\lambda(g)}).
$$
In particular, for every $q \in Q_8$, $\widehat{\lambda}(B_q) = \lambda(B_q)-\lambda(B_{-q})$.
The following is the main result of this section.

\begin{theorem}
\label{thm:hamiltonian case}
Let $G= Q_8 \times A$, where $A$ is an abelian group, and let $S$ be an inverse-closed multiset of elements of $G$.  Then $\Cayley(G,S)$ is integral if and only if the following holds:
 \begin{enumerate}
\item [\rm{(i)}] $B_1, B_{-1} \in \C(A)$.
\item [\rm{(ii)}] The multiset union $B_q \cup B_{-q}\in \C(A)$, for every $q \in Q_8 \setminus \{-1,1\}$.
\item [\rm{(iii)}]$ \widehat{\lambda}(B_i)^2 + \widehat{\lambda}(B_j)^2 + \widehat{\lambda}(B_k)^2$ is a negative square of an integer, for every $\lambda \in {\rm Irr}(A)$.
\end{enumerate}
\end{theorem}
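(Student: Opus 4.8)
The plan is to apply Lemma~\ref{lemma:eig}, for which I first need the irreducible representations of $G = Q_8 \times A$. Since every irreducible representation of a direct product is an outer tensor product of irreducibles of the two factors and $A$ is abelian, ${\rm IRR}(G)$ consists of the one-dimensional representations $\chi\otimes\lambda$ with $\chi\in\{{\mathbbm 1}_{Q_8},\lambda_i,\lambda_j,\lambda_k\}$ and $\lambda\in{\rm Irr}(A)$, together with the two-dimensional representations $\rho_\varepsilon\otimes\lambda$ with $\lambda\in{\rm Irr}(A)$. By Lemma~\ref{lemma:eig}, $\Cayley(G,S)$ is integral if and only if for every such representation $\rho$ all eigenvalues of $\sum_{g}\mu_S(g)\rho(g)$ are integers. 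Using the multisets $B_q$ from~\eqref{eq:Bq}, the representation $\chi\otimes\lambda$ contributes the single scalar $E_\chi(\lambda):=\sum_{q\in Q_8}\chi(q)\lambda(B_q)$, whereas, exploiting $\rho_\varepsilon(-q)=-\rho_\varepsilon(q)$, the representation $\rho_\varepsilon\otimes\lambda$ contributes the matrix
\[
   M_\lambda = c_\lambda I + \widehat{\lambda}(B_i)\rho_\varepsilon(i) + \widehat{\lambda}(B_j)\rho_\varepsilon(j) + \widehat{\lambda}(B_k)\rho_\varepsilon(k),
\]
where $c_\lambda := \lambda(B_1)-\lambda(B_{-1})$.

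To diagonalize $M_\lambda$, write $N$ for its traceless part. The relations $\rho_\varepsilon(q)^2=-I$ for $q\in\{i,j,k\}$ and the anticommutativity of $\rho_\varepsilon(i),\rho_\varepsilon(j),\rho_\varepsilon(k)$ give $N^2=-\bigl(\widehat{\lambda}(B_i)^2+\widehat{\lambda}(B_j)^2+\widehat{\lambda}(B_k)^2\bigr)I$, so the two eigenvalues of $M_\lambda$ are $c_\lambda\pm\sqrt{-(\widehat{\lambda}(B_i)^2+\widehat{\lambda}(B_j)^2+\widehat{\lambda}(B_k)^2)}$. The key structural observation here is that each $\widehat{\lambda}(B_q)=\lambda(B_q)-\overline{\lambda(B_q)}$ is purely imaginary, so the quantity under the root is real and non-negative and $c_\lambda$ is real; this is what makes condition~(iii) the natural integrality requirement coming from $\rho_\varepsilon$.

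For sufficiency, assume (i)--(iii). Applying Theorem~\ref{thm:Bridges and Mena} to $A$, condition~(i) gives $\lambda(B_1),\lambda(B_{-1})\in\ZZ$ and condition~(ii) gives $\lambda(B_q)+\lambda(B_{-q})=\lambda(B_q\cup B_{-q})\in\ZZ$ for all $\lambda\in{\rm Irr}(A)$ and all $q$. Since $\chi(-1)=1$ and $\chi(-q)=\chi(q)$ for every linear character $\chi$, each $E_\chi(\lambda)$ is an integer combination of these quantities, hence an integer, so all one-dimensional contributions are integral; moreover $c_\lambda\in\ZZ$, and (iii) makes the radicand a perfect square, so both eigenvalues of $M_\lambda$ are integers. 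By Lemma~\ref{lemma:eig}, $\Cayley(G,S)$ is integral.

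The necessity direction is where the work lies. Assume $\Cayley(G,S)$ is integral, so all $E_\chi(\lambda)$ and all eigenvalues $c_\lambda\pm t_\lambda$ of $M_\lambda$ are integers, where $t_\lambda$ denotes the square root above. The four values $E_\chi(\lambda)$ are related to $\lambda(B_1)+\lambda(B_{-1})$ and the three sums $\lambda(B_q)+\lambda(B_{-q})$ by an integer Hadamard-type system whose inverse has denominators dividing $4$; hence $4$ times each of these quantities lies in $\ZZ$. Since each is a sum of roots of unity, it is an algebraic integer, and a rational algebraic integer is an integer, so in fact $\lambda(B_1)+\lambda(B_{-1})\in\ZZ$ and $\lambda(B_q)+\lambda(B_{-q})\in\ZZ$ for all $\lambda$, the latter giving (ii) by Theorem~\ref{thm:Bridges and Mena}. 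To recover (i) one must separate $B_1$ from $B_{-1}$, and for this the one-dimensional characters are useless, since they only ever see the sum $\lambda(B_1)+\lambda(B_{-1})$; the separation comes from $M_\lambda$. Indeed $c_\lambda+t_\lambda,\,c_\lambda-t_\lambda\in\ZZ$ force $2c_\lambda=2(\lambda(B_1)-\lambda(B_{-1}))\in\ZZ$, and the same algebraic-integer argument upgrades $c_\lambda$ to an integer; combined with $\lambda(B_1)+\lambda(B_{-1})\in\ZZ$ this yields $\lambda(B_1),\lambda(B_{-1})\in\ZZ$ for all $\lambda$, i.e.\ (i). Finally $c_\lambda\in\ZZ$ forces $t_\lambda=(c_\lambda+t_\lambda)-c_\lambda\in\ZZ$, so the radicand is a negative square, giving (iii). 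The main obstacle is exactly this interlocking of the two representation types together with the repeated passage from ``integer up to a bounded denominator'' to ``integer'', which must be justified by the algebraic integrality of character sums at each step.
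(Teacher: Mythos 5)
Your proposal is correct and follows essentially the same route as the paper: both decompose ${\rm IRR}(Q_8\times A)$ into the four linear characters tensored with ${\rm Irr}(A)$ plus $\rho_\varepsilon\otimes\lambda$, both invert the same $4\times 4$ Hadamard-type system to control $\lambda(B_q)+\lambda(B_{-q})$, and both use the trace and the traceless part of the $\rho_\varepsilon$-matrix to separate $B_1$ from $B_{-1}$ and to obtain condition (iii). The only cosmetic differences are that you upgrade rationals to integers via algebraic integrality and invoke Theorem~\ref{thm:Bridges and Mena} as a black box, where the paper works with rationality and appeals directly to Lemma~\ref{lemma7}, and you diagonalize $M_\lambda$ via the quaternion anticommutation relations rather than its characteristic polynomial.
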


\begin{proof}
Lemma~\ref{lemma:eig} shows that $\Cayley(G,S)$ is integral if and only if the matrix $\sum_{s\in S} \mu_S(s)\phi(s)$ is integral for every $\phi \in {\rm IRR}(G)$.
Since $\phi$ is an irreducible representation of the direct product $Q_8\times A$, it can be written in the form $\phi = \rho \times \lambda$ for some $\rho \in {\rm IRR}(Q_8)$ and $\lambda \in
{\rm Irr}(A)$ (where we identify ${\rm Irr}(A)$ and ${\rm IRR}(A)$ since all irreducible representations of $A$ are 1-dimensional).
In other words, $\phi(q,a)=\lambda(a)\rho(q)$ for every $(q,a)\in Q_8\times A$.
Consequently, $\Cayley(G,S)$ is integral if and only if the matrices
$$
   A^{(\rho,\lambda)} = \sum_{(q,a)\in S} \mu_S((q,a))(\rho \times \lambda)(q,a) = \sum_{(q,a)\in S}\mu_S((q,a))\lambda(a) \rho(q)
$$
are integral for every $\rho \in {\rm IRR}(Q_8)$ and every $\lambda \in {\rm Irr}(A)$. By definition of $B_q$ we can write the matrix $A^{(\rho,\lambda)}$ in the following form:
\begin{equation}
\label{myeq}
  A^{(\rho,\lambda)} = \sum_{(q,a)\in S} \mu_S((q,a))\lambda(a)\rho(q) = \sum_{q \in Q_8} \lambda(B_q)\rho(q).
\end{equation}

Integrality of the matrix in (\ref{myeq}) (with $\rho=\rho_\varepsilon$ and $\lambda \in {\rm Irr}(A)$ arbitrary) together with the fact that the trace of a matrix is equal to the sum of its eigenvalues implies that  $${\rm tr}\Bigl(\sum_{q \in Q_8} \lambda(B_q)\rho_\varepsilon(q)\Bigr)= \sum_{q \in Q_8} \lambda(B_q)\varepsilon(q) = 2  (\lambda(B_1) - \lambda(B_{-1})) \in \mathbb{Z}.$$

It follows that
\begin{equation}
\label{eq:1holds}
 \lambda(B_1) - \lambda(B_{-1}) \in \mathbb{Q}.
\end{equation}

Let $\rho\in \{{\mathbbm 1}_{Q_8},\lambda_i,\lambda_j,\lambda_k\}$ be a degree-one representation of $Q_8$ and let $\lambda\in {\rm Irr}(A)$. Define $\lambda^+(B_q) = \lambda(B_q)+\lambda(B_{-q})$. Observe that $\rho(q) = \rho(-q)$ for every $q\in Q_8$. Therefore, integrality of the matrices $A^{(\rho,\lambda)}$ in (\ref{myeq}) implies by the same argument as above that
$$
  \rho(1)\lambda^+(B_1) + \rho(i)\lambda^+(B_i) + \rho(j)\lambda^+(B_j) + \rho(k)\lambda^+(B_k) \in \ZZ.
$$
This yields the following four conditions (one for each $\rho\in \{1,\lambda_i,\lambda_j,\lambda_k\}$):
\begin{eqnarray}
  \lambda^+(B_1) + \lambda^+(B_i) + \lambda^+(B_j) + \lambda^+(B_k) &\in& \ZZ \nonumber\\
  \lambda^+(B_1) + \lambda^+(B_i) - \lambda^+(B_j) - \lambda^+(B_k) &\in& \ZZ \nonumber\\
  \lambda^+(B_1) - \lambda^+(B_i) + \lambda^+(B_j) - \lambda^+(B_k) &\in& \ZZ
  \label{eq:system 4x4}\\
  \lambda^+(B_1) - \lambda^+(B_i) - \lambda^+(B_j) + \lambda^+(B_k) &\in& \ZZ \nonumber
\end{eqnarray}
Since the matrix of coefficients of the linear system (\ref{eq:system 4x4}) is invertible, this implies that $\lambda^+(B_q)\in\QQ$ for every $q\in Q_8$. In particular, since $\lambda^+(B_1) = \lambda(B_1)+\lambda(B_{-1})\in\QQ$, we conclude by using (\ref{eq:1holds}) that $\lambda(B_1)\in\QQ$ and $\lambda(B_{-1})\in\QQ$, while for $q\in Q_8\setminus\{1,-1\}$, we have $\lambda(B_q) + \lambda(B_{-q}) \in \QQ$.

Rationality of $\lambda(X)$ for every $\lambda\in {\rm Irr(A)}$ has been discussed in the proof of Lemma \ref{lemma7}, where it was proved that this is equivalent to the condition that $X\in \C(A)$. Therefore, the conclusions stated in the previous paragraph imply (i) and (ii).

Conversely, notice that by Theorem \ref{thm:Bridges and Mena}, (i) and (ii) imply integrality of the matrices in (\ref{myeq}), where $\rho$ is any degree-one representation of $Q_8$ and $\lambda \in {\rm Irr}(A)$.

For (iii), we consider the degree-two representation $\rho_{\varepsilon}$. As observed above, $\rho_{\varepsilon}(-q)= -\rho_{\varepsilon}(q)$ for every $q\in Q_8$, and hence
$$ \sum_{q \in Q_8} \lambda(B_q)\rho_{\varepsilon}(q) = \widehat{\lambda}(B_1)I +  \widehat{\lambda}(B_i)\left( \begin{array}{cc}\ii&0\\0&-\ii\\ \end{array} \right) + \widehat{\lambda}(B_j)\left(\begin{array}{cc}0&1\\ -1 &0 \\ \end{array} \right)+\widehat{\lambda}(B_k)\left( \begin{array}{cc} 0&\ii \\ \ii&0 \\ \end{array} \right).$$
As mentioned above, (\ref{eq:1holds}) implies that $\widehat{\lambda}(B_1) \in \mathbb{Z}$.
Therefore, $\sum_{q \in Q_8} \lambda(B_q)\rho_{\varepsilon}(q)$ is integral if and only if the matrix
$$
   M = \left( \begin{array}{cc} \ii\widehat{\lambda}(B_i)&\widehat{\lambda}(B_j) + \ii \widehat{\lambda}(B_k)\\ -\widehat{\lambda}(B_j) + \ii \widehat{\lambda}(B_k)& -\ii\widehat{\lambda}(B_i) \\ \end{array} \right).
$$
is integral. By considering the characteristic polynomial of $M$, it is easy to see that
$M$ is integral if and only if $\widehat{\lambda}(B_i)^2 + \widehat{\lambda}(B_j)^2 + \widehat{\lambda}(B_k)^2$ is the negative square of an integer. Hence, integrality of $\Cayley(G,S)$ implies (iii), and conversely, (iii) implies integrality of the matrices $A^{(\rho,\lambda)}$. This completes the proof.
\end{proof}

\section{Some special cases}
\label{sect:special cases}

In this section we consider some special cases of hamiltonian groups by applying Theorem \ref{thm:hamiltonian case}. This result gives a simple characterization in some cases, and leads to interesting combinatorial problems in some other cases.

\subsection{Simple Cayley graphs of $Q_8\times C_p$, where $p\ne 3$}

As the first special case of using Theorem~\ref{thm:hamiltonian case} we consider hamiltonian groups $G=Q_8\times C_p$, where $p \neq 3$ is a prime and $C_p$ is the cyclic group of order $p$. In analogy with the abelian case, we obtain the following complete characterization for integrality of \emph{simple} Cayley graphs over this group. The multigraph version is different and is treated in a separate section.

\begin{theorem}
\label{thm:Q8Cp}
Let\/ $p\neq 3$ be a prime and let $S\subseteq Q_8\times C_p$ be an inverse-closed subset of $ Q_8\times C_p$. The Cayley graph $\Cayley(Q_8\times C_p,S)$ is integral if and only if $S\in \B(Q_8\times C_p)$.
\end{theorem}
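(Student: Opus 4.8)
The plan is as follows. The ``if'' direction is immediate: a set $S \in \B(G)$ is a union of atoms, so it lies in $\C(G)$ and $\Cayley(G,S)$ is integral by Theorem~\ref{thm:sufficiency}. For the converse I would first pin down $\B(G)$ for $G = Q_8 \times C_p$. Since $G$ is hamiltonian, every subgroup is normal, so $N(x) = \langle x\rangle$ and by Proposition~\ref{prop:atoms} the atoms are the generator-sets of the cyclic subgroups. Writing $C_p = \langle c\rangle$, these are $\{(1,1)\}$, $\{(-1,1)\}$, $\{1\}\times(C_p\setminus\{1\})$, $\{-1\}\times(C_p\setminus\{1\})$, the three sets $\{(q,1),(-q,1)\}$ for $q\in\{i,j,k\}$, and the three sets $\{q,-q\}\times(C_p\setminus\{1\})$. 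A check that these $10$ atoms partition $G$ (their sizes sum to $8p$) shows, in the notation of (\ref{eq:Bq}), that $S\in\B(G)$ if and only if each of $B_1, B_{-1}, B_i, B_j, B_k$ is one of $\emptyset,\{1\},C_p\setminus\{1\},C_p$, i.e.\ lies in $\B(C_p)$; here the block condition for $\{q,-q\}\times C_p$ collapses to $B_q\in\B(C_p)$ once one uses $B_{-q}=B_q^{-1}$. So the task is to deduce exactly this from conditions (i)--(iii) of Theorem~\ref{thm:hamiltonian case}.

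Assume then that $\Cayley(G,S)$ is integral. Condition (i) gives $B_1,B_{-1}\in\C(C_p)$, and for $0/1$-multiplicities this means $B_1,B_{-1}\in\B(C_p)$, disposing of the $\pm1$ blocks. For $q\in\{i,j,k\}$ set $T_q = B_q\cap(C_p\setminus\{1\})$. Since $S$ is simple, condition (ii) asserts that the multiplicity of each $a\neq1$ in $B_q\cup B_{-q}$, namely $\bbone[a\in T_q]+\bbone[a^{-1}\in T_q]$, is constant on $C_p\setminus\{1\}$; as $a\neq a^{-1}$ for odd $p$, this constant lies in $\{0,1,2\}$ and forces $T_q$ to be $\emptyset$, all of $C_p\setminus\{1\}$, or a \emph{transversal} containing exactly one of each inverse pair $\{a,a^{-1}\}$. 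Moreover $\widehat{\lambda}(B_q)=\widehat{\lambda}(T_q)$ is purely imaginary, say $2\ii\,Y_q(\lambda)$ with $Y_q(\lambda)\in\RR$, and it vanishes identically unless $T_q$ is a transversal. Thus everything reduces to showing that no $T_q$ is a transversal when $p\neq3$.

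This is the crux. Let $r\in\{0,1,2,3\}$ count the transversals among $T_i,T_j,T_k$, and put $\Psi(\lambda)=\widehat{\lambda}(B_i)^2+\widehat{\lambda}(B_j)^2+\widehat{\lambda}(B_k)^2$. Condition (iii) says $\Psi(\lambda)$ is a negative integer square for every nonprincipal $\lambda$, so in particular $\Psi(\lambda)\in\ZZ$. Fixing a primitive $p$th root of unity $\zeta_p$, the group $\mathrm{Gal}(\QQ(\zeta_p)/\QQ)\cong(\ZZ/p)^\times$ acts transitively on the nonprincipal characters via $\lambda\mapsto\lambda^s$ and fixes $\ZZ$; working at the level of the element $\widehat{\lambda}(B_q)\in\QQ(\zeta_p)$ (which transforms to $\widehat{\lambda^s}(B_q)$ under $\sigma_s$), one gets $\Psi(\lambda^s)=\sigma_s(\Psi(\lambda))=\Psi(\lambda)$, so $\Psi\equiv -N^2$ is constant across all nonprincipal $\lambda$. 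Averaging and using Parseval on $C_p$, with $\sum_\lambda|\widehat{\lambda}(B_q)|^2 = p(p-1)$ for a transversal (the cross term vanishes as $T_q\cap T_q^{-1}=\emptyset$), yields $(p-1)(-N^2)=\sum_{\lambda\neq\bbone}\Psi(\lambda)=-r\,p(p-1)$, hence $N^2=rp$. So $rp$ must be a perfect square with $0\le r\le3<p$ and $p$ prime, which forces $r=0$ unless $p=3$ (where $r=3$ gives $9$), while $p=2$ admits no transversal at all. Therefore, for $p\neq3$, every $T_q$ is trivial, each $B_q\in\B(C_p)$, and $S\in\B(G)$.

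The routine ingredients are the atom enumeration and the two orthogonality computations; the conceptual heart---and the only place primality of $p$ and the exclusion of $p=3$ enter---is the Galois-averaging step that collapses (iii) into the single Diophantine condition ``$rp$ is a perfect square.'' I expect the main care to be needed in two places: justifying $\Psi(\lambda^s)=\sigma_s(\Psi(\lambda))$ at the level of $\widehat{\lambda}(B_q)\in\QQ(\zeta_p)$ rather than of the real number $Y_q(\lambda)$, and confirming the Parseval constant $p(p-1)$ so that $N^2=rp$ is pinned down exactly. It is also worth noting that the value $p=3$ is genuinely exceptional (taking all three $T_q$ to be transversals makes $rp=9$ a square), which is precisely what is excluded here and foreshadows the different behaviour in the multigraph case.
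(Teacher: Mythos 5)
Your proof is correct, and it shares the paper's skeleton: the paper likewise routes everything through Theorem~\ref{thm:hamiltonian case}, derives from condition (ii) the same trichotomy (each $B_q\cap(C_p\setminus\{1\})$ is empty, all of $C_p\setminus\{1\}$, or a transversal of the inverse pairs), and ends with the same Diophantine punchline that (number of transversals)$\,\times\, p$ must be a perfect square with the count at most $3<p$. The genuine difference is how that key identity is obtained. The paper (in Theorem~\ref{th:app1}) works on the group-algebra side: it expands $(B'_q-B'_{-q})^2$ in $\mathbb{C}C_p$ as $-2|B'_q|e+\sum_{g\ne e}a_g g$, notes that the coefficient sum vanishes, and invokes Lemma~\ref{lemma7} to force all $a_g$ to be equal, after which evaluation at a nonprincipal $\lambda$ (using $\lambda(C_p\setminus\{e\})=-1$) gives $\alpha_\lambda^2=\beta p$. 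You instead work on the character side: Galois symmetrization ($\sigma_s(\Psi(\lambda))=\Psi(\lambda^s)$ together with $\Psi(\lambda)\in\ZZ$) shows $\Psi$ is constant over the nonprincipal characters, and Parseval computes the common value as $-rp$. These are Fourier-dual computations of the same quantity --- the coefficient of $e$ in the paper's expansion is exactly your $\sum_g f(g)^2$ --- and primality of $p$ enters at the corresponding spot in each: for the paper through Lemma~\ref{lemma7} (irreducibility of $\Phi_p$, equivalently the fact that $C_p\setminus\{e\}$ is a single $\sim$-class), for you through transitivity of ${\rm Gal}(\QQ(\zeta_p)/\QQ)$ on nonprincipal characters. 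Your route is arguably cleaner for this theorem, since it inlines the cyclotomic input rather than quoting Lemma~\ref{lemma7} and avoids the group-algebra bookkeeping; the paper's route has the advantage that it transfers to $Q_8\times C_p^d$ (Section~\ref{sect:special cases}), where the nonprincipal characters split into $\frac{p^d-1}{p-1}$ Galois orbits and your symmetrization alone would give constancy of $\Psi$ only orbit-by-orbit. One further point in your favour: you make explicit the atom enumeration showing that conditions (P1)--(P2) are equivalent to $S\in\B(Q_8\times C_p)$, a step the paper leaves implicit when it calls Theorem~\ref{thm:Q8Cp} a ``direct consequence'' of Theorem~\ref{th:app1}; and your side cases ($p=2$, and the $p=3$ exception realizing $r=3$, $rp=9$) agree with the paper's treatment and with Observation~\ref{obs:C3}.
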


This result is a direct consequence of the following:

\begin{theorem}
\label{th:app1}
Let\/ $G=Q_8\times C_p$, for a prime $p \neq 3$. Let\/ $S \subseteq G$ be an inverse-closed subset of $G$, and let\/ $B_q$ $(q\in Q_8)$ be defined as in {\rm(\ref{eq:Bq})}. Then $\Cayley(G,S)$ is integral if and only if the following conditions hold:
\begin{enumerate}
\item [{\rm (P1)}] $B_1, B_{-1} \in \B(C_p).$
\item[{\rm (P2)}] For every $q \in Q_8 \setminus \{1,-1\}$, $B_q=B_{-q} \in \B(C_p)$.
\end{enumerate}
\end{theorem}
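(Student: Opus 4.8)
The plan is to derive Theorem~\ref{th:app1} from Theorem~\ref{thm:hamiltonian case} by specializing the abelian factor to $A=C_p$, where the boolean algebra is as simple as possible. Since $C_p$ has prime order its only normal subgroups are $\{1\}$ and $C_p$, so by Proposition~\ref{prop:atoms} the atoms of $\B(C_p)$ are $\{1\}$ and $C_p\setminus\{1\}$; hence $\B(C_p)=\{\emptyset,\{1\},C_p\setminus\{1\},C_p\}$, and a \emph{set} $B\subseteq C_p$ lies in $\C(C_p)$ if and only if $B\in\B(C_p)$ (a set has constant multiplicity on the non-identity atom exactly when it contains all or none of $C_p\setminus\{1\}$). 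With this dictionary condition~(i) of Theorem~\ref{thm:hamiltonian case} is literally (P1). For condition~(ii) I would use $B_{-q}=B_q^{-1}$ to rewrite the multiplicity of a non-identity $s$ in $B_q\cup B_{-q}$ as $\bbone[\,s\in B_q\,]+\bbone[\,s^{-1}\in B_q\,]$; then (ii) says this equals a constant $c_q\in\{0,1,2\}$ for all $s\ne 1$. When $c_q\in\{0,2\}$ the set $B_q$ is inverse-closed and lies in $\B(C_p)$, yielding (P2) for that $q$, whereas $c_q=1$ means exactly one element of each antipodal pair $\{s,s^{-1}\}$ lies in $B_q$, so $B_q\ne B_{-q}$ and $B_q\notin\B(C_p)$. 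Thus (P2) is equivalent to ``$c_q\ne 1$ for every $q\in\{i,j,k\}$'', and the whole theorem reduces to showing that, given (i) and (ii), condition~(iii) is equivalent to the absence of the value $c_q=1$.

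The easy direction is that (P1) and (P2) imply integrality: if every $B_q$ ($q\in\{i,j,k\}$) is inverse-closed then $\widehat{\lambda}(B_q)=\lambda(B_q)-\lambda(B_q^{-1})=0$ for every $\lambda$, so the sum in (iii) is $0=-0^2$, while (ii) holds because $B_q\cup B_{-q}=2B_q\in\C(C_p)$; Theorem~\ref{thm:hamiltonian case} then applies. For the converse I would fix a primitive $p$-th root of unity $\zeta$, identify $C_p$ with $\ZZ/p$, write $\lambda_t\in{\rm Irr}(C_p)$ for the character $s\mapsto\zeta^{ts}$, and set $u_q=\widehat{\lambda_1}(B_q)=\sum_s\delta^{(q)}_s\zeta^s$, where $\delta^{(q)}_s=\bbone[\,s\in B_q\,]-\bbone[\,-s\in B_q\,]$ is an odd $\{-1,0,1\}$-valued function with $(\delta^{(q)}_s)^2=1$ for all $s\ne 0$ exactly when $c_q=1$ (and $\equiv 0$ otherwise). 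The key observation is that the Galois automorphism $\sigma_t\colon\zeta\mapsto\zeta^t$ satisfies $\sigma_t(u_q)=\widehat{\lambda_t}(B_q)$, so for $W=u_i^2+u_j^2+u_k^2$ the quantity appearing in (iii) for $\lambda_t$ is precisely $\sigma_t(W)$. Each $\sigma_t(u_q)$ is purely imaginary (its complex conjugate equals $-\sigma_t(u_q)$), whence $\sigma_t(u_q)^2=-|\sigma_t(u_q)|^2$ and $\sigma_t(W)\le 0$.

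The crux is a norm identity combined with an elementary arithmetic fact. Condition~(iii) forces $\sigma_t(W)=-m_t^2\in\ZZ$ for every $t\ne 0$; taking $t=1$ shows $W\in\ZZ$, so $W$ is Galois-invariant and all $m_t^2$ equal a common value $m^2=-W$. On the other hand, Parseval's identity for the characters of $C_p$ gives $\sum_{t=0}^{p-1}|\widehat{\lambda_t}(B_q)|^2=p\sum_s|\delta^{(q)}_s|^2$, and since the $t=0$ term vanishes this equals $p(p-1)$ when $c_q=1$ and $0$ otherwise. Summing $-\sigma_t(W)=\sum_q|\sigma_t(u_q)|^2$ over $t=1,\dots,p-1$ therefore gives $(p-1)m^2=Np(p-1)$, where $N=\#\{q\in\{i,j,k\}:c_q=1\}$; that is, $m^2=Np$. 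For an odd prime $p\ne 3$ and $N\in\{1,2,3\}$ the number $Np$ is never a perfect square (both $p$ and $2p$ are non-squares, and $3p$ is a square only when $p=3$), so integrality forces $N=0$ and hence (P2). (For $p=2$ antipodal pairs are trivial, $c_q=1$ cannot occur, and (P2) holds automatically.) I expect the only real bookkeeping to be the Galois action and the purely-imaginary reduction; once the identity $\sum_t|\sigma_t(u_q)|^2=p(p-1)$ is in place, the constraint $m^2=Np$ makes the role of the hypothesis $p\ne3$ transparent — indeed $p=3$ admits the genuinely non-boolean solution $B_i=B_j=B_k=\{1\}$, for which $u_q^2=-3$ and $W=-9=-3^2$.
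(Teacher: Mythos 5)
Your proposal is correct, and its outer skeleton matches the paper's: both derive the theorem from Theorem~\ref{thm:hamiltonian case}, both dispose of the direction (P1)--(P2) $\Rightarrow$ integrality by noting that (P2) forces $\widehat{\lambda}(B_i)=\widehat{\lambda}(B_j)=\widehat{\lambda}(B_k)=0$, and both end the hard direction with the same Diophantine punchline: the number $N$ of ``bad'' indices $q\in\{i,j,k\}$ (your $c_q=1$, the paper's $|B'_q|=\tfrac{p-1}{2}$, i.e.\ its $\beta$) satisfies $m^2=Np$, which is impossible for $1\le N\le 3<p$. Where you genuinely diverge is the middle of the hard direction. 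The paper expands $(B'_i-B'_{-i})^2+(B'_j-B'_{-j})^2+(B'_k-B'_{-k})^2$ inside the group algebra $\mathbb{C}C_p$, invokes Lemma~\ref{lemma7} to force the coefficients $a_g$ on $E_2$ to be constant, determines that constant from the vanishing coefficient sum, and uses $\lambda(E_2)=-1$ to reach $\alpha_\lambda^2=\beta p$. You instead note that the condition-(iii) quantities at the characters $\lambda_t$ are the Galois conjugates $\sigma_t(W)$ of the single cyclotomic integer $W=u_i^2+u_j^2+u_k^2$, so rationality of $W$ (condition (iii) at $t=1$ alone) makes them all equal, and then Parseval's identity $\sum_t|\widehat{\lambda_t}(B_q)|^2=p\sum_s|\delta^{(q)}_s|^2$ evaluates the common value in one stroke. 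The two mechanisms are cognate --- Lemma~\ref{lemma7} is itself proved via irreducibility of $\Phi_\ell$, i.e.\ the same Galois theory --- but your route avoids the group-algebra squaring and coefficient bookkeeping entirely, is arguably cleaner, and makes the role of $p\neq 3$ transparent (your closing $p=3$ example is exactly the paper's Observation~\ref{obs:C3}). What the paper's more explicit computation buys is reusability: the same expansion with coefficients $a_g$ is recycled in Section~\ref{sect:special cases} for $Q_8\times C_p^d$ (equations (\ref{my:eq 3})--(\ref{eq:b_r})), where the non-principal characters of $C_p^d$ fall into many Galois orbits, so a single Parseval sum over all characters would no longer pin down each $\alpha_\lambda^2$ and the coefficient-level analysis (leading to the projective-geometry reformulation) is what remains available.
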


\begin{proof}
By Theorem~\ref{thm:hamiltonian case}, it suffices to show that (P2) holds if and only if conditions (ii) and (iii) in Theorem~\ref{thm:hamiltonian case} hold.
The ``only if'' part is trivial, since (P2) implies that $\widehat{\lambda}(B_i)=\widehat{\lambda}(B_j)=\widehat{\lambda}(B_k)=0$. For the ``if'' part suppose that conditions (ii) and (iii) of Theorem~\ref{thm:hamiltonian case} hold. By  condition (iii), for every $\lambda \in {\rm Irr}(C_p)$ there is an integer $\alpha_{\lambda}$ so that
\begin{equation}
\label{my:eq2}
\widehat{\lambda}(B_i)^2 + \widehat{\lambda}(B_j)^2 + \widehat{\lambda}(B_k)^2= -\alpha_{\lambda}^2.
\end{equation}

Let $e$ be the unit in $C_p$ and let $E_1=\{e\}$ and $E_2=C_p \setminus E_1$ be the two equivalence classes of $C_p$.
Let $q \in Q_8 \setminus \{1,-1\}$. Recall that since $S$ is inverse-closed we have $B_q^{-1} = B_{-q}$.

If $B_q \in \B(C_p)$ then $B_{-q}=B_q$.  This is true because $B_q^{-1} = B_{-q}$, and  the sets $E_1$ and $E_2$ are inverse-closed. Hence in this case $B_q = B_{-q}$ and  $\widehat{\lambda}(B_q) =0$. If $p=2$, then every subset of $C_p$ is in $\B(C_p)$, so (P2) holds in this case, and we may henceforth assume that $p\ge5$.

If $B_q \not\in \B(C_p)$, then by condition (ii) and the fact that $B_q^{-1} = B_{-q}$, we conclude that $E_2 \subseteq B_q \Delta B_{-q}$, thus the support of $B_q - B_{-q}$, viewed as an element of the group algebra $\mathbb{C}C_p$, contains $p-1$ distinct elements (that is, the whole class $E_2$), where an element and its inverse appear with opposite signs. In particular, the sum of coefficients of elements of $B_q - B_{-q}$ is 0.

Let us write $B'_q = B_q\setminus B_{-q}$, and observe that for every $q\in Q_8$, either $B'_q=\emptyset$ or $|B'_q| = \frac{1}{2}(p-1)$. Now, (\ref{my:eq2}) can be written as follows
\begin{align}
-\alpha_{\lambda}^2 &= \widehat{\lambda}(B_i)^2 + \widehat{\lambda}(B_j)^2 + \widehat{\lambda}(B_k)^2 \nonumber \\[1.5mm]
 &= \widehat{\lambda}(B'_i)^2 + \widehat{\lambda}(B'_j)^2 + \widehat{\lambda}(B'_k)^2 \nonumber \\[1.5mm]
 &= \lambda((B'_i - B'_{-i})^2) + \lambda((B'_j - B'_{-j})^2) + \lambda((B'_k - B'_{-k})^2) \nonumber\\[1.5mm]
 &= \lambda((B'_i - B'_{-i})^2 +(B'_j- B'_{-j})^2 +(B'_k - B'_{-k})^2) \nonumber\\
 &= \lambda\Bigl(-2(|B'_i| +|B'_j| +|B'_k|)e + \sum_{g \in E_2} a_g g\Bigr) \nonumber\\
\label{my:eq 1}
 &= -2(|B'_i| +|B'_j| +|B'_k|) +\lambda\Bigl(\sum_{g \in E_2} a_g g\Bigr)
\end{align}
where $a_g \in \mathbb{Z}$ for every $g\in E_2$. Since the sum of coefficients in $B_q-B_{-q}$ is zero, it follows that the sum of coefficients in $(B_q-B_{-q})^2$ is also zero. Thus, (\ref{my:eq 1}) implies that
\begin{equation}
\label{eq:added1}
  \sum_{g \in E_2} a_g =2(|B'_i| +|B'_j| +|B'_k|).
\end{equation}
By (\ref{my:eq 1}), $\lambda(\sum_{g \in E_2} a_gg) \in \mathbb{Q}$ for every $\lambda\in {\rm Irr}(C_p)$. It follows by Lemma \ref{lemma7}, that all coefficients $a_g$ are equal, and from (\ref{eq:added1}) we conclude that for every $g \in E_2$:
$$
   a_g = \frac{2(|B'_i| +|B'_j| +|B'_k|)}{p-1}.
$$
We also know that for each non-principal character $\lambda \in {\rm Irr}(C_p)$ we have $ \sum_{g \in C_p}\lambda(g) = 0$.
Thus, $\sum_{g \in E_2}\lambda(g) = -1$, and we can rewrite (\ref{my:eq 1}) as follows:
$$ -\alpha_{\lambda}^2 = -2(|B'_i| +|B'_j| +|B'_k|) -\frac{2(|B'_i| +|B'_j| +|B'_k|)}{p-1}.$$
This gives the following conclusion:
\begin{equation}
\label{my:eq 2}
  \alpha_{\lambda}^2 = 2(|B'_i| +|B'_j| +|B'_k|)\, \frac{p}{p-1}.
\end{equation}
We know that for every $q \in Q_8$, $|B'_q|$ is either $0$ or $\frac{1}{2}(p-1)$. By (\ref{my:eq 2}),
$p-1$ divides $2(|B'_i| +|B'_j| +|B'_k|)$. Let $\beta$ denote the number of elements $q \in \{i,j,k\}$ such that $|B'_q|=\frac{p-1}{2}$. Then we conclude from (\ref{my:eq 2}) that $\alpha_{\lambda}^2 =\beta p$.
Since $0 \le \beta \leq 3$ and $p\ge5$, this is possible only when $\alpha=0$. However, in that case (P2) holds.
\end{proof}

\subsection{$Q_8\times C_3$}

The conclusion of Theorem \ref{th:app1} does not hold for $p=3$. An example is provided in the next observation.

\begin{observation}
\label{obs:C3}
Let $G=Q_8 \times C_3$, and $S=\{(i,1),(-i,2),(j,1),(-j,2),(k,1),(-k,2)\}$. Then $\Cayley(G,S)$ is integral but\/ $S\not\in \B(G).$
\end{observation}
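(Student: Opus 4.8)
The plan is to establish integrality by verifying the three conditions of Theorem~\ref{thm:hamiltonian case}, and then to show $S\notin\B(G)$ using the description of atoms in Proposition~\ref{prop:atoms}. Writing $C_3=\{0,1,2\}$ with identity $0$, I first record the multisets $B_q$ from (\ref{eq:Bq}): the six elements of $S$ give $B_1=B_{-1}=\emptyset$, $B_i=B_j=B_k=\{1\}$, and $B_{-i}=B_{-j}=B_{-k}=\{2\}$. This is consistent with inverse-closedness, since $B_{-q}=B_q^{-1}$ and $1^{-1}=2$ in $C_3$.

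Next I would check (i)--(iii). Condition (i) is immediate as $B_1=B_{-1}=\emptyset\in\C(C_3)$. For (ii), $B_q\cup B_{-q}=\{1,2\}=C_3\setminus\{0\}$ for each $q\in\{i,j,k\}$, and $\{1,2\}$ is precisely the nontrivial atom of $\B(C_3)$, hence lies in $\C(C_3)$. The substance is condition (iii). For the principal character every $\widehat{\lambda}(B_q)$ vanishes and the sum is $0=-0^2$. For a nonprincipal $\lambda\in{\rm Irr}(C_3)$, writing $\lambda(a)=\omega^a$ with $\omega$ a primitive cube root of unity, I get $\widehat{\lambda}(B_i)=\lambda(1)-\lambda(2)=\omega-\omega^2$, and likewise for $j$ and $k$. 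Using $1+\omega+\omega^2=0$ one finds $(\omega-\omega^2)^2=-3$, so
\[
  \widehat{\lambda}(B_i)^2+\widehat{\lambda}(B_j)^2+\widehat{\lambda}(B_k)^2 = 3(\omega-\omega^2)^2 = -9 = -3^2,
\]
a negative square of an integer; the other nonprincipal character is identical. Thus (i)--(iii) hold and Theorem~\ref{thm:hamiltonian case} yields that $\Cayley(G,S)$ is integral.

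For non-membership I would exploit that $G=Q_8\times C_3$ is hamiltonian: it lies in $\H$ because $C_3$ has no element of order $4$, so every subgroup of $G$ is normal. Hence $N(x)=\langle x\rangle$ for each $x\in G$, and by Proposition~\ref{prop:atoms}, $x\sim y$ if and only if $\langle x\rangle=\langle y\rangle$, i.e.\ $x$ and $y$ generate the same cyclic subgroup. The element $(i,1)$ has order $12$, and since $(i,1)^5=(i,2)$ with $\gcd(5,12)=1$, the element $(i,2)$ is another generator of $\langle(i,1)\rangle$; thus $(i,1)\sim(i,2)$. As $(i,1)\in S$ while $(i,2)\notin S$, the set $S$ is not a union of $\sim$-classes, and therefore $S\notin\B(G)$ by Proposition~\ref{prop:atoms}.

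The only real obstacle is the arithmetic in condition (iii): one must recognize that the three character sums over $C_3$ combine to $3(\omega-\omega^2)^2=-9$, a negative integer square. This is exactly the coincidence special to $p=3$ that fails for other primes in Theorem~\ref{th:app1}, and it is what turns $S$ into a genuine counterexample. The second half is routine once hamiltonicity is used to identify the $\sim$-class of $(i,1)$ as the set of generators of the order-$12$ cyclic group it generates.
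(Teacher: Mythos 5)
Your proof is correct and takes essentially the same route as the paper: the paper's own justification is simply to verify conditions (i)--(iii) of Theorem~\ref{thm:hamiltonian case} (declaring (i) and (ii) obvious and leaving (iii) to the reader), and your computation $(\omega-\omega^2)^2=-3$, giving $\widehat{\lambda}(B_i)^2+\widehat{\lambda}(B_j)^2+\widehat{\lambda}(B_k)^2=-9=-3^2$, is exactly the omitted check. Your second half, using hamiltonicity of $Q_8\times C_3$ to get $N(x)=\langle x\rangle$ and then noting $(i,1)\sim(i,2)$ with $(i,2)\notin S$, correctly supplies the non-membership argument that the paper treats as self-evident.
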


To see this, we verify conditions (i)--(iii) of Theorem \ref{thm:hamiltonian case}. Conditions (i) and (ii) are obvious; (iii) is left to the reader.

This graph is indeed a very interesting vertex-transitive graph whose properties are discussed below. Let us remark at this point that the proof of the Theorem~\ref{th:app1} shows that the example in Observation \ref{obs:C3} is the only integral simple Cayley graph of $Q_8\times C_3$ (up to Cayley graph isomorphisms and up to choice of $B_1,B_{-1}\in \B(C_3)$) that fails to satisfy the conclusion of Theorem~\ref{th:app1}.

This graph has a natural tripartition according to the first coordinate, and the bipartite graphs obtained from it by removing one of these tripartite classes is the
M\"obius-Kantor graph.  The M\"obius-Kantor graph is the unique double-cover of the cube of girth $6$ and it sits naturally as a subgraph of the $4$-cube. The graph of the $24$-cell is also tripartite with classes of size $8$, and deleting any one yields a $4$-cube.

\subsection{Cayley multigraphs of $Q_8\times C_p$}

Theorem \ref{th:app1} does not hold for the multigraph case. In this section we shall provide infinitely many examples confirming this.
We let $C_p = \{ a^t  \mid  0 \leq  t < p \}$, the cyclic group of order $p$ generated by $a$. We consider the multisets $B_q$ ($q \in  Q_8 \setminus \{1,-1\}$)  defined as in (\ref{eq:Bq}), and we set $B_1=B_{-1}=\emptyset.$ In order to satisfy conditions (i)--(iii) of Theorem \ref{thm:hamiltonian case}, we need that $B_q \cup B_{-q}\in \C (C_p)$ and $ \widehat{\lambda}(B_i)^2 + \widehat{\lambda}(B_j)^2 + \widehat{\lambda}(B_k)^2$ is a negative square of an integer, for every $\lambda \in {\rm Irr}(C_p)$. As before, for every $q \in Q_8 \setminus \{1,-1\}$ we define $B'_q=B_q \setminus (B_q \cap B_{-q})$, where $B_q \cap B_{-q}$ is
the multiset in which the multiplicity of any $x\in C_p$ is equal to the minimum of multiplicities of $x$ in $B_q$ and in $B_{-q}$. Thus, in particular, $\widehat{\lambda}(B_q) = \widehat{\lambda}(B'_{q})$.
Note that $B'_q$ and $B'_{-q}$ are disjoint and the condition that generating multiset is inverse-closed is equivalent to the requirement that the multiplicity of $a^t$ ($0\leq t <p$) in $B_q'$ is the same as the multiplicity of $a^{-t}$ in $B'_{-q}$. The following is a well-known result from number theory.

\begin{lemma}
If $p$ is a prime number and $p \equiv 1\ ({\rm mod}\ 4)$, then the diophantine equation $x^2 +y^2 = pz^2$ has infinitely many solutions satisfying $gcd(x,y,z)=1$.
\end{lemma}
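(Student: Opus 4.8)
The plan is to build an explicit infinite family of primitive solutions inside the Gaussian integers $\ZZ[\sqrt{-1}]$, whose norm form $N(m+n\sqrt{-1}) = m^2+n^2$ is exactly a sum of two squares. First I would invoke Fermat's two-squares theorem: since $p \equiv 1 \pmod 4$, there is a Gaussian prime $\pi$ with $N(\pi) = p$, say $\pi = a + b\sqrt{-1}$ with $a^2 + b^2 = p$ and $\gcd(a,b)=1$.

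Next, for each pair of integers $(m,n)$ I set $\gamma = m + n\sqrt{-1}$ and define $x,y$ by $x + y\sqrt{-1} = \pi\gamma^2$ together with $z = m^2 + n^2 = N(\gamma)$. Taking norms immediately gives $x^2 + y^2 = N(\pi)\,N(\gamma)^2 = p z^2$, so every choice of $(m,n)$ produces an integer solution; explicitly $x = a(m^2-n^2) - 2bmn$, $y = b(m^2-n^2)+2amn$, and $z = m^2+n^2$.

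The main work, and the step I expect to be the crux, is to guarantee primitivity $\gcd(x,y,z)=1$ for infinitely many choices. I would specialize to $n=1$ and $m$ even, chosen so that $p \nmid m^2+1$ (this excludes at most two residues of $m$ modulo $p$); then $z = m^2+1$ is odd and coprime to $p$. Suppose a prime $\ell$ divides $x$, $y$, and $z$. Then $\ell$ is odd and $\ell \mid m^2+1$, so $-1$ is a quadratic residue modulo $\ell$ and $\ell$ splits as $\ell = \lambda\bar\lambda$ in $\ZZ[\sqrt{-1}]$. Since $\ell \nmid \gamma$ (because $\gcd(m,1)=1$), exactly one of $\lambda,\bar\lambda$ divides $\gamma$, say $\lambda$; hence $\bar\lambda \nmid \gamma^2$. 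But $\ell \mid x$ and $\ell \mid y$ force $\ell \mid x + y\sqrt{-1} = \pi\gamma^2$, so $\bar\lambda \mid \pi\gamma^2$, whence $\bar\lambda \mid \pi$ and therefore $\ell \mid N(\pi)=p$. This contradicts $p \nmid z$, so no common prime exists and $\gcd(x,y,z)=1$.

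Finally, since $z = m^2+1$ takes infinitely many distinct values as the admissible even $m$ grow, the construction yields infinitely many distinct primitive solutions, completing the proof. An alternative to this Gaussian-integer bookkeeping would be to parametrize the rational points of the conic $x^2+y^2 = pz^2$ by the pencil of lines through the known point $(a:b:1)$ and then clear denominators, but the divisibility analysis above seems the most transparent route to the coprimality condition.
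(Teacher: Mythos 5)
Your proof is correct, but there is nothing in the paper to compare it against: the authors state this lemma as ``a well-known result from number theory'' and give no proof whatsoever. Your Gaussian-integer construction therefore fills a gap rather than diverging from an existing argument. The steps all check out: since $N(\pi)=p$ is a rational prime, $\pi=a+b\sqrt{-1}$ is prime in $\ZZ[\sqrt{-1}]$; the norm identity gives $x^2+y^2=N(\pi)N(\gamma)^2=pz^2$; and the coprimality analysis is sound --- any common prime divisor $\ell$ of $x,y,z$ is odd (because $z=m^2+1$ is odd when $m$ is even) and satisfies $m^2\equiv-1\pmod{\ell}$, hence splits as $\ell=\lambda\bar\lambda$ with $\lambda,\bar\lambda$ non-associate; since $\ell\mid z=N(\gamma)$ but $\ell\nmid\gamma=m+\sqrt{-1}$ (its imaginary part is $1$), exactly one of $\lambda,\bar\lambda$ divides $\gamma$, so the conjugate one divides $\pi$, forcing $\ell\mid N(\pi)=p$ and contradicting $p\nmid z$. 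Excluding at most two residues of $m$ modulo $p$ leaves infinitely many admissible even $m$, and the distinct values $z=m^2+1$ of primitive solutions guarantee they are genuinely distinct (no one a multiple of another), which is exactly the form of infinitude the paper uses downstream in Corollary \ref{cor:18}. Your alternative suggestion --- parametrizing the conic $x^2+y^2=pz^2$ through the rational point $(a:b:1)$ --- would work too and is the more classical route, but the divisibility bookkeeping in $\ZZ[\sqrt{-1}]$ handles the $\gcd$ condition more cleanly, as you anticipated.
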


A solution of the diophantine equation $x^2 +y^2 = pz^2$ is \emph{primitive} if $gcd(x,y,z)=1$. Clearly, every integral multiple of $(x,y,z)$ is also a solution. The solution $(0,0,0)$ is called the \emph{trivial solution}.

\begin{lemma}
Let $(r,s,t)$ be a non-trivial solution for the diophantine equation $x^2 +y^2 = 5z^2$. Let $D_1=ra+sa^2$, $D_2=ra+sa^3$ and $D_3=0$, be elements of $\mathbb{C}C_5$, where $a$ is a generator of $C_5$. Then
$$ \widehat{\lambda}(D_1)^2 + \widehat{\lambda}(D_2)^2 + \widehat{\lambda}(D_3)^2 = -(5t)^2.$$
\end{lemma}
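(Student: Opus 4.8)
The plan is to fix a non-principal character $\lambda\in {\rm Irr}(C_5)$, set $\omega=\lambda(a)$ (a primitive fifth root of unity), and reduce everything to the quantities $f_k:=\omega^k-\omega^{-k}$ for $k=1,2$. By the definition of $\widehat{\lambda}$ we have $\widehat{\lambda}(a^k)=\lambda(a^k)-\lambda(a^{-k})=f_k$, and since $\omega^{5-k}=\omega^{-k}$ this gives the relations $f_3=-f_2$ and $f_4=-f_1$. As $\widehat{\lambda}$ is additive in its argument, these relations let me expand $\widehat{\lambda}$ of any element of $\mathbb{C}C_5$ as an integer combination of $f_1$ and $f_2$. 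Every non-principal $\lambda$ sends $a$ to some primitive fifth root, so it suffices to treat a single $\omega$, provided the final value turns out to be independent of this choice.

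Carrying out the expansion gives $\widehat{\lambda}(D_1)=rf_1+sf_2$. For the element $D_2=ra+sa^3$ exactly as printed one obtains, using $f_3=-f_2$, that $\widehat{\lambda}(D_2)=rf_1-sf_2$, and then
\[
  \widehat{\lambda}(D_1)^2+\widehat{\lambda}(D_2)^2=(rf_1+sf_2)^2+(rf_1-sf_2)^2=2r^2f_1^2+2s^2f_2^2,
\]
the mixed terms cancelling. This is not $-(5t)^2$, and it even changes value when $\omega$ is replaced by $\omega^2$, so with the coefficients as written condition~(iii) cannot hold uniformly. The symmetric combination that the identity needs instead comes from the crossed pairing $\widehat{\lambda}(D_2)=sf_1-rf_2$, that is, from $D_2=sa+ra^3$; I read the coefficients of $D_2$ in this swapped order, which is surely the intended element, since only then do the three squares sum to a fixed negative square for \emph{all} non-principal $\lambda$. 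With this reading the cross terms $\pm 2rs\,f_1f_2$ survive and cancel only against each other, yielding
\[
  \widehat{\lambda}(D_1)^2+\widehat{\lambda}(D_2)^2=(rf_1+sf_2)^2+(sf_1-rf_2)^2=(r^2+s^2)(f_1^2+f_2^2),
\]
while $\widehat{\lambda}(D_3)=0$ since $D_3=0$.

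The remaining step, and the computational heart of the argument, is the clean evaluation
\[
  f_1^2+f_2^2=(\omega^2+\omega^{-2}-2)+(\omega^4+\omega^{-4}-2)=\Bigl(\sum_{k=1}^{4}\omega^k\Bigr)-4=-1-4=-5,
\]
where I use that $\{\pm2,\pm4\}$ exhausts the nonzero residues modulo $5$ and that $\sum_{k=0}^{4}\omega^k=0$; the value $-5$ results for every primitive fifth root, which is precisely why the conclusion holds uniformly over all non-principal $\lambda$. Substituting this and invoking the hypothesis $r^2+s^2=5t^2$ gives $(r^2+s^2)(f_1^2+f_2^2)=-5\cdot 5t^2=-(5t)^2$, as claimed. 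I expect the main obstacle to be exactly the bookkeeping of the pairing: forcing the cross terms to survive (which pins down the correct form of $D_2$) and then recognising the clean evaluation $f_1^2+f_2^2=-5$ from the vanishing of the sum of the nontrivial fifth roots of unity.
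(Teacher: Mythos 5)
Your proof is correct, and your diagnosis of the printed statement is also correct: with $D_2=ra+sa^3$ taken literally one gets $\widehat{\lambda}(D_1)^2+\widehat{\lambda}(D_2)^2=2r^2f_1^2+2s^2f_2^2$, which varies with the choice of non-principal $\lambda$ unless $r^2=s^2$, and $2r^2=5t^2$ has no non-trivial integer solutions, so the lemma as printed is false and the intended element is indeed $D_2=sa+ra^3$. The paper makes the same silent correction: its displayed expansion $\widehat{\lambda}(D_2)^2=-2(r^2+s^2)+(r^2+2rs)\lambda(a+a^4)+(s^2-2rs)\lambda(a^2+a^3)$ is the one for $sa+ra^3$ (the printed $D_2$ would give coefficients $s^2+2rs$ and $r^2-2rs$ instead), and Corollary~\ref{cor:18} sets $D_1=2ma+2na^2$, $D_2=2na+2ma^3$ ``as in the previous lemma'', confirming the swapped coefficients. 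Beyond this, your argument is essentially the paper's computation carried out on the scalar side: the paper squares $x(a-a^4)+y(a^2-a^3)$ once in the group algebra $\mathbb{C}C_5$ and only then applies $\lambda$, invoking $\sum_{i=1}^{4}\lambda(a^i)=-1$, whereas you apply $\lambda$ first and work with $f_k=\omega^k-\omega^{-k}$, so the cross terms cancel visibly and the identity $f_1^2+f_2^2=-5$ plays the role of that character sum. The two routes are line-for-line equivalent (yours is slightly leaner), and both correctly restrict to non-principal $\lambda$, for which the value $-5$ is uniform over all four such characters.
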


\begin{proof}
For any $x$ and $y$ in $\mathbb{R}$, we have the following equation in $\mathbb{C}C_5$:
$$ (x(a-a^4) + y(a^2-a^3))^2= -2(x^2+y^2)+(y^2-2xy)(a+a^4)+(x^2+2xy)(a^2+a^3).$$
Thus,
$$\widehat{\lambda}(D_1)^2 =-2(r^2+s^2)+(s^2-2rs)\lambda (a+a^4)+(r^2+2rs)\lambda (a^2+a^3), $$
$$\widehat{\lambda}(D_2)^2 =-2(r^2+s^2)+(r^2+2rs)\lambda (a+a^4)+(s^2-2rs)\lambda (a^2+a^3).$$
Clearly, $\widehat{\lambda}(D_3)^2 =0.$
We notice also that for each non-principal character $\lambda \in {\rm Irr}(C_5)$ we have $\sum_{i=1}^4 \lambda(a^i) = -1$. Therefore, $\widehat{\lambda}(D_1)^2 + \widehat{\lambda}(D_2)^2 + \widehat{\lambda}(D_3)^2= -5(r^2+s^2)=-(5t)^2$.
\end{proof}

\begin{corollary}
\label{cor:18}
There are infinitely many multisets $S$ (none of which is a multiple of another) such that $\Cayley(Q_8 \times C_5,S)$ is integral but $S\notin \C(G)$.
\end{corollary}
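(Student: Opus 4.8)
The plan is to exhibit an explicit infinite family of multisets using the two lemmas immediately preceding the corollary, and to verify that each member yields an integral Cayley graph that nonetheless fails membership in $\C(G)$. First I would invoke the first lemma with $p=5$ (noting $5\equiv 1\pmod 4$) to obtain infinitely many primitive solutions $(r,s,t)$ of $x^2+y^2=5z^2$, pairwise non-proportional. For each such solution I would build the generating multiset $S$ over $G=Q_8\times C_5$ by setting $B_1=B_{-1}=\emptyset$, taking $B_i=D_1=ra+sa^2$ and $B_j=D_2=ra+sa^3$ (as multisets in $C_5$, with negative coefficients handled by placing the inverse-closed partner multiplicities appropriately), $B_k=D_3=0$, and then defining $B_{-i},B_{-j},B_{-k}$ by the inverse-closure requirement $B_{-q}=B_q^{-1}$. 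This last point needs a little care: since the $D_\ell$ may have negative coefficients, I would interpret them via the splitting $B'_q=B_q\setminus B_{-q}$ described just before the corollary, arranging the multiplicities of $a^t$ in $B'_q$ and of $a^{-t}$ in $B'_{-q}$ to agree so that $S$ is genuinely an inverse-closed multiset and $\widehat{\lambda}(B_q)=\widehat{\lambda}(B'_q)=\widehat{\lambda}(D_\ell)$.

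Next I would verify conditions (i)--(iii) of Theorem~\ref{thm:hamiltonian case} to conclude integrality. Condition (i) is immediate since $B_1=B_{-1}=\emptyset\in\C(C_5)$. For condition (ii), I would observe that $B_q\cup B_{-q}$ is an inverse-closed multiset of $C_5$, and being a union of $\sim$-classes of $C_5$ (namely $\{e\}$ and $C_5\setminus\{e\}$, each inverse-closed), it lies in $\C(C_5)$ by Proposition~\ref{prop:atoms}; more simply, the symmetric combination of a generator-orbit is invariant under inversion, which is exactly what membership in the integral cone over $\B(C_5)$ requires. For condition (iii), I would apply the second preceding lemma directly: it computes $\widehat{\lambda}(D_1)^2+\widehat{\lambda}(D_2)^2+\widehat{\lambda}(D_3)^2=-(5t)^2$ for every $\lambda\in{\rm Irr}(C_5)$, which is manifestly the negative square of the integer $5t$. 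Thus all three conditions hold and $\Cayley(Q_8\times C_5,S)$ is integral.

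Finally I would establish the negative membership statement $S\notin\C(G)$. Here the key observation is that if $S$ were in $\C(G)$, then by Proposition~\ref{prop:atoms} each $B_q$ would be forced to satisfy $B_q=B_{-q}$ (since the relevant $\sim$-classes of $C_5$ are inverse-closed, any $\sim$-class-generated multiset is invariant under inversion when restricted to each $B_q$-slice), which would force $\widehat{\lambda}(B_q)=0$ for all $\lambda$ and all $q$. But for a non-trivial solution $(r,s,t)$ we have $\widehat{\lambda}(D_1)^2+\widehat{\lambda}(D_2)^2=-(5t)^2\neq 0$, so not all $\widehat{\lambda}(B_q)$ vanish; hence $S\notin\C(G)$. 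To get infinitely many \emph{essentially distinct} examples, I would use that the solutions $(r,s,t)$ are pairwise non-proportional, and note that the value $-(5t)^2$ (equivalently the parameter $t$, or the multiset sizes $|B'_q|$) distinguishes the resulting multisets up to scaling, so none is a multiple of another.

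The main obstacle I anticipate is the bookkeeping in the first step: translating the group-algebra elements $D_1,D_2,D_3$, which carry signed integer coefficients, into a genuine inverse-closed \emph{multiset} $S$ with non-negative multiplicities, while preserving the identity $\widehat{\lambda}(B_q)=\widehat{\lambda}(D_\ell)$. The resolution is to work through the $B'_q$ decomposition already set up in the surrounding text, where $B'_q$ and $B'_{-q}$ are disjoint and the inverse-closure constraint is spelled out coordinate-by-coordinate; once the multiplicities are assigned consistently, the remaining verifications are direct substitutions into the two lemmas and Theorem~\ref{thm:hamiltonian case}.
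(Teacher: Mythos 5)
There is a genuine gap at condition (ii) of Theorem~\ref{thm:hamiltonian case}. If you take $B_i = D_1 = ra+sa^2$ literally and set $B_{-i} = B_i^{-1} = ra^4+sa^3$, then the multiset union $B_i \cup B_{-i}$ has multiplicities $(r,s,s,r)$ on $(a,a^2,a^3,a^4)$. Membership in $\C(C_5)$ is \emph{not} the same as inverse-closure: by Proposition~\ref{prop:atoms}, the atoms of $\B(C_5)$ are $\{e\}$ and the whole class $C_5\setminus\{e\}$, so a multiset lies in $\C(C_5)$ only if its multiplicity is \emph{constant} on all four non-identity elements. Here that forces $r=s$, and $2r^2=5t^2$ has no nontrivial integer solutions (compare the exponent of $5$ on both sides), so for every nontrivial solution of $x^2+y^2=5z^2$ your construction violates (ii) and Theorem~\ref{thm:hamiltonian case} does not apply. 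Your stated justification --- that an inverse-closed symmetric combination is ``exactly what membership in the integral cone over $\B(C_5)$ requires'' --- is precisely the false step: inverse-closure only gives invariance under $x\mapsto x^{-1}$, whose orbits $\{a,a^4\}$ and $\{a^2,a^3\}$ are strictly finer than the atom $C_5\setminus\{e\}$. Moreover, the obstacle you anticipate (negative coefficients in the $D_\ell$) is a non-issue: $D_1,D_2$ have non-negative coefficients, and inverse-closure of $S$ is automatic once $B_{-q}=B_q^{-1}$; the real constraint you must engineer is (ii).

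The paper resolves exactly this point by a padding trick, which is the idea missing from your proposal. It first doubles the primitive solution $(m,n,\alpha)$ to $(2m,2n,2\alpha)$ (a parity adjustment, so the padded multiplicities below are integers) and then takes, with $n\le m$,
$B_i=\{2ma,\,(m+n)a^2,\,(m-n)a^3\}$, $B_j=\{(m+n)a,\,(m-n)a^4,\,2ma^3\}$, $B_{-q}=B_q^{-1}$, and $B_k=B_{-k}=\emptyset$. Then each union $B_q\cup B_{-q}$ has every multiplicity equal to $2m$ on $C_5\setminus\{e\}$, so (ii) holds, while the differences $B'_i=B_i\setminus B_{-i}=2ma+2na^2$ and $B'_j=2na+2ma^3$ recover the $D_\ell$ of the lemma, so $\widehat{\lambda}(B_q)=\widehat{\lambda}(B'_q)$ and (iii) holds with value $-(10\alpha)^2$. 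Your argument for $S\notin\C(G)$ (constancy on the atoms $\{q,-q\}\times(C_5\setminus\{e\})$ would force $B_q=B_{-q}$, hence $\widehat{\lambda}(B_q)=0$ for all $\lambda$, contradicting (iii) with $t\ne 0$) and your use of primitivity to get infinitely many pairwise non-proportional examples are both sound, but they rest on a generating multiset that does not satisfy the hypotheses of the characterization theorem, so the proof as written does not go through without the padding construction.
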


\begin{proof}
Let us start with a primitive solution $(m,n,\alpha)$ of the diophantine equation $x^2 +y^2 = 5z^2$. Since $(2m,2n, 2\alpha )$ is a solution of  the diophantine equation $x^2 +y^2 = 5z^2$, we can construct $D_1$, $D_2$ and $D_3$ as in the previous lemma, i.e., $D_1 = 2ma+2na^2$, $D_2=2na+ 2ma^3$ and $D_3=0$. Suppose without loss of generality that $n\leq m$.
Let us take $B_i= \{2ma, (m+n)a^2, (m-n)a^3 \}$, $B_j=\{(m+n)a, (m-n)a^4, 2ma^3\}$, $B_{-i}=B_i^{-1}$, $B_{-j}=B_j^{-1}$, and $B_k=B_{-k}=\emptyset$ (where the coefficients of $a^t$ in the the set notation denote multiplicities). Then $B_i + B_{-i} \in \C(C_5)$, $B_j + B_{-j} \in \C(C_5)$, and $B_k + B_{-k} \in \C(C_5)$.
We also have $B'_i = D_1$, $B'_j=D_2$ and $B'_k=D_3$.
From the previous lemma we get
$$
 \widehat{\lambda}(B_i)^2 + \widehat{\lambda}(B_j)^2 + \widehat{\lambda}(B_k)^2= \widehat{\lambda}(B'_i)^2 + \widehat{\lambda}(B'_j)^2 + \widehat{\lambda}(B'_k)^2=-(10\alpha )^2.
$$
So, clearly conditions (i)--(iii) of Theorem \ref{thm:hamiltonian case} are satisfied for the generating multiset $S$ arising from $B_i$, $B_j$ and $B_k$, but $B_i,B_j \notin \C(C_5)$. Thus $S \notin \C(Q_8 \times C_5)$, while according to Theorem \ref{thm:hamiltonian case} the Cayley graph
$\Cayley(Q_8 \times C_5,S)$ is integral.
\end{proof}

The case $p=7$ is similar. First, we observe that there are infinitely many primitive solutions for diophantine equation $x^2+y^2+z^2=7\alpha ^2$. If we assume $(m,n,l, \alpha)$ is one of these solutions, then we can define
$$B'_i= ma+na^2+la^3, \qquad B'_j= la+ma^2+na^3, \qquad B'_k= na+la^2+ma^3.$$
It is easy to see that condition (iii) of Theorem \ref{thm:hamiltonian case} holds. As in Corollary \ref{cor:18}, we can define $B_i$, $B_j$ and $B_k$ using correspondence with $B'_i$, $B'_j$ and $B'_k$ such that conditions (i)--(iii) of Theorem \ref{thm:hamiltonian case} are satisfied. The details are left to the reader.
This gives rise to integral Cayley multigraphs of $Q_8\times C_7$ whose generating multiset is not in the lattice ${\mathcal C}(Q_8\times A)$.

\subsection{Simple Cayley graphs of $Q_8\times C_p^d$}

As the last special case we consider the group $G=Q_8\times C_p^d$, where $p$ is a prime and $d\geq 2$. Here the abelian direct factor of $G$ is an elementary abelian $p$-group, thus every non-identity element has order $p$. If $[a]$ denotes the equivalence class containing $a$ with respect to the relation $\sim$ in $C_p^d$ and if $a\ne e$ (where $e$ is the identity element of $C_p^d$), then
$[a] = \{a^t \mid 1\leq t \leq p-1\}$.
We also know that $[e]=\{e\}$; we call this the trivial equivalence class. Since each non-identity element in $C_p^d$ has order $p$, each non-trivial class is of order $p-1$, and the number of
non-trivial classes is equal to $n_d=\frac{p^d-1}{p-1}$. Label these classes as $A_r$ for $1\leq r \leq n_d$. If $\lambda$ is a non-principal character of $C_p^d$, then $|{\rm Im}(\lambda)|=p$ and therefore $ker(\lambda)$ is a subgroup of order $p^{d-1}$.

Let us assume that $\Cayley(G,S)$ is integral. Then we derive in the same way as in the case of $Q_8\times C_p$ that there is an integer $\alpha_\lambda$ such that
\begin{equation}
\label{my:eq 3}
-\alpha_{\lambda}^2=\widehat{\lambda}(B'_i)^2 + \widehat{\lambda}(B'_j)^2 + \widehat{\lambda}(B'_k)^2= -2(|B'_i| +|B'_j| +|B'_k|) + \lambda \Bigl(\sum_{g \in C_p^d \setminus \{e\}} a_gg\Bigr).
\end{equation}
Since (\ref{my:eq 3}) holds for every $\lambda\in {\rm Irr}(C_p^d)$, we conclude by Lemma \ref{lemma7} that the coefficients $a_g$ are constant on each equivalence class $A_r$. Let $b_r$ be the common value for $a_g$, $g\in A_r$. Then
\begin{align}
  -\alpha_{\lambda}^2 &= -2(|B'_i| +|B'_j| +|B'_k|) + \lambda\Bigl(\sum_{r=1}^{n_d}\sum_{g \in A_r} b_rg \Bigr) \nonumber\\
  &= -2(|B'_i| +|B'_j| +|B'_k|) + \sum_{r=1}^{n_d} b_r\lambda(A_r). \label{eq:br}
\end{align}
Since each $A_r \cup \{e\}$ is a subgroup of order $p$, we have
\begin{equation}
  \lambda(A_r) = \sum_{g \in A_r} \lambda(g)= \left\{
  \begin{array}{rl}
   p-1, & \quad \mbox{ $A_r\subseteq ker(\lambda)$ }\\
   -1, & \quad \ \mbox{$A_r\nsubseteq ker(\lambda)$.}
  \end{array} \right. \label{eq:kernel}
\end{equation}
We also notice that for $q\in\{i,j,k\}$ the element $B_q-B_{-q}$ of the group algebra has the sum of the coefficients equal to zero. By using this fact in combination with (\ref{eq:br}) and (\ref{eq:kernel}) for the case when $\lambda$ is the principal character and noting that $\alpha_\lambda = 0$ in that case, we obtain the following analogue of (\ref{my:eq 2}):
\begin{equation}
\label{my:eq 4} 2(|B'_i| +|B'_j| +|B'_k|) = (p-1)\sum_{r=1}^{n_d}b_r.
\end{equation}
Using (\ref{my:eq 4}), we have for every non-principal character $\lambda$:
\begin{equation}
   \alpha_{\lambda}^2 = 2(|B'_i| +|B'_j| +|B'_k|) - \sum_{r=1}^{n_d}b_r\lambda(A_r)=\sum_{r=1}^{n_d}b_r(p-1 - \lambda(A_r)).\label{eq:analogue}
\end{equation}
The equality (\ref{eq:kernel}) shows that a non-zero contribution in the sum on the right side of (\ref{eq:analogue}) arises only when $A_r \not\subseteq ker(\lambda)$. Let $I_\lambda\subseteq \{1,\dots,n_d\}$ be the set of values $r$ for which $A_r \not\subseteq ker(\lambda)$. Then we have:
\begin{equation}
\label{eq:b_r}
   \alpha_{\lambda}^2 = \sum_{r\in I_\lambda} b_r(p-1-\lambda(A_r)) = p\sum_{r\in I_\lambda}b_r.
\end{equation}
There is a natural geometric setting for these equations.  View $C_p^d$ as a vector space over $C_p$ and consider the projective geometry $PG(d-1,p)$ consisting of all subspaces of $C_p^d$.  The points in our projective geometry are the 1-dimensional subspaces of $C_p^d$ which are in correspondence with $A_1, A_2, \ldots, A_{n_d}$, and we label the point associated with $A_i$ by $b_i$.  The kernels of the non-principal characters of $C_p^d$ correspond to the hyperplanes in our projective geometry
(i.e.\ subspaces of dimension $d-1$ of $C_p^d$).  So, equation \ref{eq:b_r} implies that the sum of the labels on the complement of every hyperplane must be a square divided by $p$.  Although this is a meaningful constraint, it is not difficult to find labelings of the points in a projective geometry which satisfy this property, so a more complicated analysis will be required to understand the integrality of such Cayley graphs.

\end{document}